\documentclass{amsart}

\usepackage{amssymb}
\usepackage{amscd}

\newtheorem{theorem}{Theorem}[section]
\newtheorem{proposition}[theorem]{Proposition}
\newtheorem{corollary}[theorem]{Corollary}

\newtheorem{fact}[theorem]{Fact}

\newtheorem{definition}[theorem]{Definition}
\newtheorem{remark}[theorem]{Remark}
\newtheorem{example}[theorem]{Example}

\providecommand{\rk}{\mathop{rank}}
\providecommand{\pw}{\mathop{pw}}
\providecommand{\pr}{\mathop{PF}}

\providecommand{\Th}{\mathop{Th}}
\providecommand{\sgn}{\mathop{sign}}

\newcommand\CC{\mathcal C}
\newcommand\CF{{\mathcal F}}

\newcommand\CL{{\mathcal L}}
\newcommand\CM{{\mathcal M}}

\newcommand\CP{{\mathcal P}}

\newcommand\CU{{\mathcal U}}
\newcommand\BN{{\mathbb N}}
\newcommand\BZ{{\mathbb Z}}
\newcommand\BQ{{\mathbb Q}}

\newcommand\BF{{\mathbb F}}
\newcommand\fm{\mathfrak{m}}
\newcommand\fp{\mathfrak{p}}

\newcommand\x{\bar{x}}
\newcommand\y{\bar{y}}
\newcommand\z{\bar{z}}
\newcommand\g{\bar g}
\newcommand\ba{\bar{a}}
\newcommand\bb{\bar{b}}

\newcommand{\twopartdef}[4]
{
	\left\{
		\begin{array}{ll}
			#1 & \mbox{if } #2 \\
			#3 & \mbox{if } #4
		\end{array}
	\right.
}

\title{Some arithmetic properties on nonstandard rationals}

\author{Junguk \textsc{Lee}}
\address{Department of Mathematics\\ Yonsei University\\
50 Yonsei-Ro, Seodaemun-Gu\\
Seoul 03722, Korea}
%
\thanks{The author thanks Anand Pillay for his suggestions and comments in Appendix.}

%
%
\begin{document}
\maketitle

\begin{abstract}
For a given number field $K$, we show that the ranks of nonsingular elliptic curves over $K$ are uniformly finitely bounded if and only if weak Mordell-Weil property holds in all(some) ultrapowers $^*K$ of $K$. Also we introduce nonstandard Mordell-Weil property for $^*K$ considering each Mordell-Weil group as $^*\BZ$-module, where $^*\BZ$ is an ultrapower of $\BZ$, and we show that nonstandard Mordell-Weil property is equivalent to weak Mordell-Weil property in $^*K$. In Appendix, we showed that it is possible to consider definable abelian groups as $^*\BZ$-modules in a saturated nonstandard rational number field $^*\BQ$ so that nonstandard Mordell-Weil property is well-defined, and thus we showed that nonstandard Mordell-Weil property and weak Mordell-Weil property are equivalent. 

Next we focus on priems and prime ideals of nonstandard raional number fields. We give an infinite factorization theorem on $^*\BQ$ using valuations induced from primes of $^*\BZ$, and we classify maximal and prime ideal of $^*\BZ$ in terms of maximal filter on the set of primes of $^*\BZ$ and ordered semigroups of the valuation semigroup induced from maximal ideals of $^*\BZ$.
\end{abstract}

\section{Introduction}
In this note, we see some arithmetic properties of nonstandard number fields, which are ultrapowers of a given number field. At first, we look at elliptic curves over nonstandard number fields and most of all we are interested in the ranks of elliptic curves. The rank of a elliptic curve $E$ on a given field $K$ is an important invariant to measure the size of $K$-rational points $E(K)$. The $K$-rational points $E(K)$ forms an abelian group, called Mordell-Weil group and so $E(K)\otimes_{\BZ} \BQ$ forms a $\BQ$-vector space. The dimension of this vector space is called the rank of $E(K)$, denoted by $\rk E(K)$. The ranks of elliptic curves over global fields like number fields or finite extensions of function fields over finite fields are finite by Mordell-Weil Theorem. One can ask how the ranks of elliptic curves over a global field can be large. It has a negative answer for the case of function fields. In \cite{ST}\cite{U} it's shown that the rank of elliptic curve can be arbitrary large in $\BF_p(t)$. But it is not known much about the boundedness of ranks of elliptic curves over number field. In \cite{RS1} it's shown that the quadratic twists $E_d$ of a elliptic curve $E$ over $\BQ$ have a bounded rank if and only if a series associated with $E$ is convergent and in \cite{BS}, the average rank of elliptic curves over $\BQ$ has a finite value. Here, we show that weak Mordell-Weil properties of $\aleph_1$-saturated nonstandard number fields imply that the ranks of all elliptic curves over a given number field are uniformly finitely bounded.

Next we focus on primes and prime ideals in the nonstandard rational number fields. A nonstandard rational number field $^*\BQ$ has a nonstandard integer ring $^*\BZ$ corresponding to $\BZ$ in $\BQ$. It satisfies some basic arithmetic properties of $\BZ$ : Its field of fractions is $^*\BQ$, it is integrally closed in $^*\BQ$, and the units in $^*\BZ$ are only $\pm 1$. Unfortunately $^*\BZ$ need not be a Dedekind domain and not Noetherian any more. But $^*\BZ$ has the set of primes and each prime gives a valuation on $^*\BQ$. Using these valuations we can identify elements in $^*\BQ$, and thus we get an infinite version of factorization. As a consequence, $^*\BZ$ is the intersection of valuation rings of each valuation induced from primes. Even though $^*\BZ$ is not a principal ideal domain, any finitely generated ideal is a principal ideal. Combining this property and infinite factorization, we classify maximal ideals in terms of filters on the set of primes in $\BZ$. From the classification of prime ideals in ultraproducts of Dedekind domains in \cite{OS}, we classify prime ideals of $^*\BZ$ in terms of valuation semigroups induced from maximal ideals.
\section{Uniformly finite boundedness of the ranks of elliptic curves}
We start with a review of elliptic curves over a field $K$ of zero characteristic. A nonsingular elliptic curve $E$ over $K$ is given by the following equation \begin{equation*}
y^2=x^3+Ax+B, 4A^3+27B^2\neq 0
\end{equation*}
for $A,B\in K$. It is well-known that the set $E(K)$ of $K$-rational points of $E$ 

\begin{equation*}
\{(x,y)\in K^2|\ y^2=x^3+Ax+B\}\cup\{P_{\infty}\}
\end{equation*}
forms an abelian group called Mordell-Weil group. If $K$ is a global field for example a number field or a finite extension of function field over $\BF_q$, then Mordell-Weil Theorem says that $E(K)$ is finitely generated and its rank is finite. Now let $K$ be a global field. Then by Mordell-Weil Theorem, the information of rank of $E(K)$ is contained in the cardinality of {\em weak $n$th Mordell-Weil group}. Let $n\ge 2$ and let 
\begin{equation*}
nE(K):=\{P\in E(K):\ \exists Q\in E(K),\ P=\overbrace{Q+_E\cdots+_E Q}^{n}\}.
\end{equation*}
The {\em weak $n$th Mordell-Weil group} of $E$ over $K$ is the quotient of $E(K)$ by $nE(K)$, denoted by $E/nE(K)$. Then $n^r\le |E/nE(K)|\le n^r+n^2$ where $r$ is the rank of $E$ over $K$ for all $n\ge 2$. From now on, if there is no confusion on $K$, we omit 'over $K$'. We consider only nonsingular elliptic curves. We say a field $L$ has {\em Weak Mordell-Weil property} if for any elliptic curve $E$ over $L$, each weak $n$th Mordell-Weil group of $E$ over $L$ is finite.\\

\medskip

We may consider elliptic curves as definable objects in $K$. Fix $A,B\in K$ such that $4A^3+27B^2\neq 0$. Then $E(K)$ can be seen as a definable subset of $K^3$ by the following formula
\begin{equation*}
E(A,B;x,y,z)\equiv [(y^2=x^3+Ax+B)\wedge z=1]\vee [x=0\wedge y=1\wedge z=0]\wedge (4A^3+27B^2\neq0).
\end{equation*}
Moreover the group operation $+_E$ of the Mordell-Weil group $(E(K),+_E)$ is also definable, that is, the graph of $+_E:E(K)\times E(K)\rightarrow E(K)$ is a definable subset of $K^3\times K^3\times K^3$ given by the following formula : For $\x=(x_0,x_1,x_2)$, $\y=(y_0,y_1,y_2)$, and $\z=(z_0,z_1,z_2)$,
\begin{equation*}
\begin{array}{c c l}
+_E(\x,\y,\z)&\equiv&E(\x)\wedge E(\y)\wedge E(\z)
\wedge\\
&&[(x_2=0\rightarrow \bigwedge_{0\le i\le 2}\limits y_i=z_i)\\
&\vee&(y_2=0\rightarrow \bigwedge_{0\le j\le 2}\limits x_j=z_j)\\
&\vee&(x_0=y_0\wedge x_1\neq y_1\rightarrow (z_0=0\wedge z_1=1\wedge z_2=0))\\
&\vee&(x_0=y_0\wedge x_1= y_1 \rightarrow (z_0=(\frac{3x_0^2+A}{2y_0})^2-x_0-x_1)\wedge\\
&&(z_1=-\frac{3x_0^2+A}{2y_0}z_0-(x_1-\frac{3x_0^2+A}{2y_0}x_0))\wedge\\
&&(z_2=1))\\
&\vee&(x_0\neq y_0\rightarrow(z_0=(\frac{y_1-x_1}{y_0-x_0})^2-x_0-x_1)\wedge\\
&&(z_1=\frac{y_1-x_1}{y_0-x_0}z_0-\frac{y_0x_1-y_1x_0}{y_0-x_0})\wedge\\
&&(z_2=1))].
\end{array}
\end{equation*}
For $n\ge 2$, each $nE(K)$ is definable. Consider a formula 
\begin{equation*}nE(\x)\equiv E(\x)\wedge \exists \x_1,\x_2,\ldots,\x_{n}\ (\bigwedge_{1\le i< n}\limits (E(\x_i)\wedge +_E(\x_i,\x_1,\x_{i+1}))\wedge \x_n=\x)
\end{equation*}
and this formula defines $nE(K)$.\\

\medskip

We recall basic properties of the notion of ultraproduct. Fix a countably infinite index set $I$ and a nonprincipal ultrafilter $\CU$ on $I$. Let $\{\CM_i\}_{i\in I}$ be a set of $\CL$-structures by the set $I$. Take the ultraproduct $\Pi_{\CU} \CM_i:=\Pi_{i\in I} \CM_i/\sim_{\CU}$ of $\CM_i$'s with respect to the ultrafilter $\CU$, where $(a_i)\sim_{\CU}(b_i)$ if and only if $\{i\in I|a_i=b_i\}\in \CU$. We denote by $(a_i)$ the element in $\Pi_{\CU} \CM_i$ given by the equivalence class of $(a_i)$. A subset $S$ of $\Pi_{\CU}\CM_i$ is called {\em induced} if it is of the form of $(S_i)/\sim_{\CU}$ for some $S_i\subset \CM_i$.
\begin{remark}
Let $\{\CM_i\}_{i\in I}$ be a collection of infinite structures indexed by $I$.
\begin{enumerate}
	\item For a formula $\phi(x_1,\ldots,x_n)$ and $a^1=(a_i^1),\ldots, a^n=(a_i^n)\in \Pi_{\CU} \CM_i$,
\begin{equation*}
\Pi_{\CU}\CM_i\models \phi(a^1,\ldots,a^n)\ \Leftrightarrow\ \{i\in I|\ \CM_i\models \phi(a_1^i,\ldots,a_n^i)\}\in \CU.
\end{equation*}

	\item The ultraproduct $\Pi_{\CU}\CM_i$ is $\aleph_1$-saturated.
	
	\item An induced set $\Pi_{\CU}\CM_i$ is finite or at least $\ge 2^{\aleph_0}$, and any definable set is induced.
\end{enumerate}
\end{remark}
\noindent For a fixed infinite structure $\CM$, if $\CM_i=\CM$ for $i\in I$, we write $^*\CM_{\CU}$ for the ultrapower of $\CM$ with respect to the ultrafilter $\CU$. We write $^*\CM$ if $\CU$ is obvious. In this case, there is a canonical embedding $\iota$ from $\CM$ to $^*\CM$ and this embedding is an elementary embedding, that is, for a formula $\phi(\x)$ with $|\x|=n$ and $\ba\in \CM^n$, $\CM\models \phi(\ba)$ if and only if $^*\CM\models \phi(\iota(\ba))$. So $^*\CM$ is an $\aleph_1$-saturated elementary extension of $\CM$.

At first, we see some properties of ultraproducts of abelian groups. Let $\{A_i\}_{i\in I}$ be a set of abelian groups and consider the ultraproduct $\Pi_{\CU}A_i$. Then we can consider the ultraproduct $\Pi_{\CU}A_i$ as a $^*\BZ$-module, where $^*\BZ$ is the ultrapower $\Pi_{\CU}\BZ$ of $\BZ$ as follows : For $a=(a_i)\in \Pi_{\CU}A_i$ and $n=(n_i)\in \Pi_{\CU}\BZ$, define $na:=(n_ia_i)$.
\begin{remark}\label{ultraproduct_abeliangroup}
Let $\{A_i\}_{i\in I}$ be a set of abelian groups indexed by $I$.
\begin{enumerate}
	\item If each $A_i$ is generated by $n$-elements, then the ultraproduct $\Pi_{\CU}A_i$ is generated by $n$-many elements as $^{*}\BZ$-module.
	\item If the ultraproduct $\Pi_{\CU}A_i$ is finteley generated as $^*\BZ$-module, then the cardinality of the quotient of $\Pi_{\CU}A_i$ by $k\Pi_{\CU}A_i$ is finite for all $k\ge 1$. More precisely, if $\Pi_{\CU}A_i$ is generated by $n$-elements, then $|\Pi_{\CU}A_i/k\Pi_{\CU}A_i|\le k^n$ for each $k\ge 1$.
\end{enumerate}
\end{remark}
\begin{proof}
(1) Suppose $A_i$ is generated by $a_{i1},\ldots,a_{in}$ for each $i\in I$. Then $\Pi_{\CU}A_i$ is generated by $a_1=(a_{i1}),\ldots,a_n(a_{in})$ as $^*\BZ$-module. Take $x=(x_i)\in \Pi_{\CU}A_i$ arbitrary. For each $i\in I$, there are $c_{1i},\ldots,c_{ni}$ in $\BZ$ such that $x_i=c_{1i}a_{i1}+\cdots+c_{ni}a_{in}$ so that $x=c_1a_1+\cdots+c_na_n$ for $c_1=(c_{1i}),\ldots,c_n=(c_{ni})\in {^*\BZ}$. Thus $\Pi_{\CU}A_i$ is generated by $n$-elements as $^*\BZ$-module.\\

(2) Suppose $\Pi_{\CU}A_i$ is generated by $a_1,\ldots,a_n$ as $^*\BZ$-module for some $n\ge 1$. Let $k\ge 1$. Define a map $f_k:{^*\BZ/k{^*\BZ}}\times\cdots\times {^*\BZ/k{^*\BZ}}\rightarrow \Pi_{\CU}A_i/k\Pi_{\CU}A_i$ by sending $(c_1,\ldots,c_n)$ to $c_1a_1+\cdots+c_na_n$. Then this map is well-defined and it is onto. Since $^*\BZ/k{^*\BZ}\cong \BZ/k\BZ$, the domain of $f_k$ is finite and its cardinality is $k^n$. Thus, the cardinality of $\Pi_{\CU}A_i/k\Pi_{\CU}A_i$ is less than or equal to $k^n$.
\end{proof}
\noindent Note that $E(K)$ is an ultraproduct of elliptic curves over $K$ for a ultrapower $^*K$ of a field $K$ and an elliptic curve $E$ over $^*K$. Thus $E({^*K})$ is an ultraproduct of abelian groups.\\  

\medskip

From now on, we fix a number field $K$. We see some relations between the ranks of elliptic curves over $K$ and over $^*K$. Let $E$ be an elliptic curve over $K$, then $E$ is also over $^*K$ and $E(K)\subset E(^*K)$. So, it can be directly shown that the rank of $E({^*K})$ is equal to or larger than the rank of $E(K)$. But unfortunately the rank of elliptic curve is not an elementary invariant, that is, for an elliptic curve $E$ over $K$, the rank of $E(K)$ need not be equal to the rank of $E({^*K})$ unless $\rk E(K)=0$.
\begin{theorem}\label{rank_not_elementary}
If $\rk E(K)$ is not equal to $0$, then $\rk E({^*K})>\rk E(K)$ and $\rk E({^*K})$ is always infinite. If $\rk E(K)=0$, then $E(K)=E({^*K})$.
\end{theorem}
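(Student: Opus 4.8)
The plan is to prove the two implications separately, using throughout that, since $E$ is defined over $K$ by the displayed formulas $E(A,B;\x)$ and $+_E$ with parameters $A,B\in K$, the group $E({^*K})$ is exactly the ultrapower ${^*(E(K))}=\Pi_{\CU}E(K)$ of the Mordell-Weil group $E(K)$, equipped with the $^*\BZ$-module structure of Remark~\ref{ultraproduct_abeliangroup}.

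For the case $\rk E(K)=0$: by the Mordell-Weil theorem $E(K)$ is finitely generated, and having rank $0$ it is therefore finite; set $m=|E(K)|$. The assertion ``there are exactly $m$ points satisfying $E(A,B;\x)$'' is a first-order sentence over $K$, so since ${^*K}$ is an elementary extension of $K$ it also holds in ${^*K}$, giving $|E({^*K})|=m$. As $E(K)\subseteq E({^*K})$ and both are finite of the same cardinality, they coincide.

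For the case $r:=\rk E(K)\ge1$ (which is finite, by Mordell-Weil, since $K$ is a number field): first I would use the structure theorem for finitely generated abelian groups to write $E(K)\cong\BZ^{r}\oplus T$ with $T$ the finite torsion subgroup, and then apply the ultrapower to this isomorphism. Since the ultrapower commutes with finite direct sums and fixes finite groups, this gives $E({^*K})\cong({^*\BZ})^{r}\oplus T$ as abelian groups. Tensoring with $\BQ$ kills $T$, so $\rk E({^*K})=r\cdot\dim_{\BQ}({^*\BZ}\otimes_{\BZ}\BQ)$, and it remains to show that ${^*\BZ}\otimes_{\BZ}\BQ$ is infinite-dimensional over $\BQ$. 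For that I would fix a nonstandard $\omega\in{^*\BZ}$ and argue that the images of $1,\omega,\omega^{2},\dots$ are $\BQ$-linearly independent: clearing denominators, a nontrivial relation among them would exhibit $\omega$ as a root of some nonzero polynomial in $\BZ[x]$, whereas by elementarity every root in ${^*\BZ}$ of such a polynomial is among its finitely many standard integer roots, and $\omega$ is not standard. Hence $\dim_{\BQ}({^*\BZ}\otimes_{\BZ}\BQ)\ge\aleph_{0}$, so $\rk E({^*K})$ is infinite, and in particular strictly larger than the finite number $\rk E(K)$.

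I do not expect a deep obstacle here: the only non-formal ingredient is the Mordell-Weil theorem, invoked just to know $E(K)$ is finitely generated of finite rank, and the remainder is {\L}o\'s's theorem together with the identification $E({^*K})={^*(E(K))}$. The point most worth checking carefully is that the structure-theorem isomorphism really descends to the ultrapower; this is immediate from functoriality of the ultrapower, but one can also avoid it by picking a point $P\in E(K)$ of infinite order, setting $P_{k}:=\omega^{k}P\in E({^*K})$ for $k\ge0$, and checking directly, via {\L}o\'s's theorem and, once more, the fact that a nonzero integer polynomial has only standard integer roots, that $P_{0},P_{1},P_{2},\dots$ are $\BQ$-linearly independent in $E({^*K})\otimes_{\BZ}\BQ$.
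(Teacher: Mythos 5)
Your proof is correct, and the rank-$0$ case coincides with the paper's: both simply transfer the first-order sentence ``$E$ has exactly $m$ points'' from $K$ to ${}^*K$. For the positive-rank case, however, you take a genuinely different route. The paper argues by cardinality: $E({}^*K)$ is an infinite induced subset of an ultraproduct, hence has cardinality $2^{\aleph_0}$; the torsion subgroup is countable (each $n$-torsion is finite); so $E({}^*K)\otimes\BQ$ still has cardinality $2^{\aleph_0}$, forcing its $\BQ$-dimension to be exactly $2^{\aleph_0}$. You instead invoke the structure theorem $E(K)\cong\BZ^r\oplus T$, push it through the ultrapower functor to get $E({}^*K)\cong({}^*\BZ)^r\oplus T$, and then reduce everything to the single module-theoretic fact that ${}^*\BZ\otimes_\BZ\BQ$ has infinite $\BQ$-dimension, which you establish by an elementarity argument on roots of integer polynomials (a nonstandard $\omega$ cannot satisfy a nontrivial $\sum a_i\omega^i=0$ with $a_i\in\BZ$, since that sentence would transfer a new root back to $\BZ$). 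Both arguments are sound. The paper's counting argument gives the sharper conclusion $\rk E({}^*K)=2^{\aleph_0}$ with less machinery, while yours yields only $\rk E({}^*K)\ge\aleph_0$ (which is all the statement requires) but has the advantage of exhibiting the $^*\BZ$-module structure explicitly — in fact your decomposition $E({}^*K)\cong({}^*\BZ)^r\oplus T$ is exactly what makes the later equivalence between ``$^*K$ has nonstandard Mordell-Weil property'' and uniform rank boundedness transparent, so your route ties in more naturally with the rest of Section 2. One could of course combine the two: your structural isomorphism together with $|{}^*\BZ|=2^{\aleph_0}$ recovers the exact value as well.
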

\begin{proof}
Let $E$ be an elliptic curve over $K$. Suppose $\rk E(K)>0$ so that $E(K)$ is infinite. Thus $E({^*K})$ is infinite. Since $E({^*K})$ is induced and infinite, the cardinality of $E({^*K})$ is $2^{\aleph_0}$. Also there are countably many torsion points in $E({^*K})$. Thus the vector space $E({^*K})\otimes \BQ$ is of cardinality of $2^{\aleph_0}$. So its dimension as $\BQ$-vector space is $2^{\aleph_0}$ and $\rk E({^*\BZ})=2^{\aleph_0}$.

%
%

Suppose $\rk E(K)=0$ so that $E(K)$ is finite. Let $k=|E(K)|$. We can write down in the sentence $\phi$ saying there are only $k$-many points in $E$. Since $K$ and $^*K$ are elementary equivalent, $^*K\models \phi$, that is, $k=|E({^*K})|$. Always $E(K)$ is a subset of $E({^*K})$ and therefore $E(K)$ and $E({^*K})$ are same.
\end{proof} 
\noindent From Proposition \ref{rank_not_elementary}, the rank itself may not be an elementary invariant. By the way each weak $n$th group may be a good elementary invariant. Consider an equivalence relation $\sim_{E,n}$ on $E$ defined by the following formula \[\exists \z\ (nE(\z)\wedge +_E(\x,\z,\y))\] for each elliptic curve $E$ and $n\ge 2$.
\begin{proposition}\label{weak-nth_elementary}
Let $E$ be an elliptic curve over $K$. For any $n\ge 2$, $E(K)/nE(K)=E({^*K}/nE({^*K})$.
\end{proposition}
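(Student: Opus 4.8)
The plan is to show that the finite group $E(K)/nE(K)$ is unchanged when passing to the elementary extension $^*K$, which is really a statement that "having a fixed finite quotient" is an elementary property. First I would observe that, since $E(K)$ is finitely generated by Mordell--Weil, the weak $n$th Mordell--Weil group $E(K)/nE(K)$ is finite; let $k = |E(K)/nE(K)|$ and pick coset representatives $P_1,\dots,P_k \in E(K)$. I would then write down a first-order sentence $\psi_{E,n}$ over $K$ (using the formulas $E(\x)$, $+_E(\x,\y,\z)$, $nE(\x)$, and the equivalence relation $\sim_{E,n}$ already introduced) that asserts: (i) the displayed elements $P_1,\dots,P_k$ are pairwise inequivalent under $\sim_{E,n}$, and (ii) every point of $E$ is $\sim_{E,n}$-equivalent to one of $P_1,\dots,P_k$. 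Point (i) is a finite conjunction of negated instances of the definable relation $\sim_{E,n}$; point (ii) is a single $\forall$-statement $\forall \x\,(E(\x) \rightarrow \bigvee_{j=1}^{k} \x \sim_{E,n} P_j)$.

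Next I would invoke the fact that $\iota : K \to {}^*K$ is an elementary embedding, so $K \models \psi_{E,n}$ implies $^*K \models \psi_{E,n}$. Interpreting $\psi_{E,n}$ in $^*K$: the cosets $\iota(P_1),\dots,\iota(P_k)$ remain pairwise distinct in $E(^*K)/nE(^*K)$ by (i), and by (ii) they exhaust $E(^*K)/nE(^*K)$, because the formula $nE(\x)$ defines $nE(^*K)$ in $^*K$ (as noted in the excerpt, $nE$ is defined by a first-order formula, and definability is preserved along elementary extensions) and likewise $\sim_{E,n}$ defines the congruence modulo $nE(^*K)$. Hence $|E(^*K)/nE(^*K)| = k = |E(K)/nE(K)|$. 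Since the inclusion $E(K) \hookrightarrow E(^*K)$ induces a homomorphism $E(K)/nE(K) \to E(^*K)/nE(^*K)$ which is injective by (i) and surjective by (ii) — indeed the same representatives work on both sides — the induced map is an isomorphism, giving the desired equality $E(K)/nE(K) = E(^*K)/nE(^*K)$.

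The only genuinely delicate point is making sure that the defining formulas behave correctly: one must check that $nE(\x)$ really does define $nE(^*K)$ in $^*K$ and not merely a subset, i.e. that the existential quantifier over the $n$ witnesses $\x_1,\dots,\x_n$ in the formula $nE(\x)$ is genuinely first-order (it is, since $n$ is a fixed standard integer), so that $\Pi_\CU E$-style absoluteness applies; the same care is needed for $\sim_{E,n}$. Once that is in place, everything else is a routine transfer argument. I expect the main obstacle to be purely expository: writing $\psi_{E,n}$ cleanly and confirming that the choice of representatives $P_1,\dots,P_k$ can be named by a single sentence (equivalently, that one argues with a formula having $P_1,\dots,P_k$ as parameters and then uses elementarity of $\iota$ on formulas with parameters from $K$). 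Alternatively, and perhaps more cleanly, one can avoid naming representatives altogether: for each fixed $k$, the statement "$E/{\sim_{E,n}}$ has exactly $k$ classes" is expressible by a single sentence $\chi_k$ (there exist $k$ pairwise inequivalent points such that every point is equivalent to one of them), and since $E(K)$ satisfies exactly one such $\chi_k$, so does $E(^*K)$ by elementarity, with the inclusion again forcing the identification of the two quotients.
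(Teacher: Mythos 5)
Your proposal is correct and takes essentially the same route as the paper: transfer a first-order sentence counting $\sim_{E,n}$-classes across the elementary embedding $\iota$, then observe that the induced map $E(K)/nE(K)\to E({^*K})/nE({^*K})$ must therefore be a bijection. The parameter-free ``alternative'' you sketch at the end is exactly the paper's sentence $\phi'_{E,n}$, and the only cosmetic difference is that the paper derives injectivity of the induced map from the identity $E(K)\cap nE({^*K})=nE(K)$ rather than from the transferred pairwise-inequivalence clause as you do.
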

\begin{proof}
Let $E$ be an elliptic curve over $K$ with $|E(K)/nE(K)|=k_n<\infty$ for $n\ge 2$. Fix $n\ge 2$. There is a natural embedding $\iota_E$ from $E(K)$ to $E({^*K})$ and it induces a map $\iota_{E,n}$ from $E(K)/nE(K)$ to $E({^*K})/nE({^*K})$. Since $E(K)\cap nE({^*K})=nE(K)$, this map is injective. It remains to show surjectivity. For $n\ge 2$ and $k\ge 1$, define a formula 
\begin{equation*}
\begin{array}{c c l}
\phi_{E,n,k}(\x_1,\ldots,\x_{k})&\equiv&\bigwedge_{1\le i\le k}\limits E(\x_i)\\
&\wedge&\bigwedge_{1\le i<j\le k}\limits \neg(\x_i\sim_{E,n}\x_j).
\end{array}
\end{equation*}
Next consider the following sentence
\begin{equation*}
\begin{array}{c c l}
\phi_{E,n}' &\equiv& \exists \x_1\ldots\x_{k_n}[\phi_{E,n,k_n}(\x_1,\ldots,\x_{k_n})\\
&\wedge& \forall \x\ (E(\x)\rightarrow \bigvee_{1\le i\le k_n}\limits (\x\sim_{E,n}\x_i))].
\end{array}
\end{equation*}
Since $|E(K)/nE(K)|=k_n<\infty$, $K\models \phi_{E,n}'$, and so $^*K \models \phi_{E,n}'$. Thus $|E/nE({^*K})|=k_n$ also. Thus $\iota_{E,n}$ is surjective and it is bijective. Therefore $\iota_{E,n}:E/nE(K)\cong E/nE({^*K})$.
\end{proof}
\noindent It looks good to use a notion of weak $n$th Mordell-Weil groups rather than a notion of rank of elliptic curve to see the relation between ranks of elliptic curve over $K$ and over $^*K$. We get the following equivalent conditions for the boundedness of ranks of elliptic curves over $K$.
\begin{definition}
We say $^*K$ has nonstandard Mordell-Weil property if each Mordell-Weil group of elliptic curve over $^*K$ is finitely generated as $^*\BZ$-module. 
\end{definition}
\begin{theorem}\label{rkbound_wMW}
The followings are equivalent :
\begin{enumerate}
	\item The ranks of elliptic curves over $K$ are uniformly finitely bounded.
	\item For each $n\ge 2$, the cardinalities of weak $n$th Mordell-Weil groups over $K$ are uniformly finitely bounded.
	\item The cardinalities of weak $2$nd Mordell-Weil groups over $K$ are uniformly finitely bounded.
	\item For any nonprincipal ultrafilter $\CU$ on $I$, $^*K_{\CU}$ has Weak Mordell-Weil property.
	\item For some nonprincipal ultrafilter $\CU$ on $I$, $^*K_{\CU}$ has Weak Mordell-Weil property.
	\item Weak $2$nd Mordell-weil groups of elliptic curves over some $^*K$ are finite.
	\item Nonstandard Moredell-Weil property hold for all $^*K$.
	\item Nonstandard Moredell-Weil property hold for some $^*K$.
\end{enumerate}
\end{theorem}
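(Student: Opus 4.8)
The plan is to prove the scheme $(1)\Rightarrow(2)\Rightarrow(3)\Rightarrow(1)$ among the three ``standard'' statements, then to splice in the Weak Mordell--Weil conditions through $(2)\Rightarrow(4)\Rightarrow(5)\Rightarrow(6)\Rightarrow(3)$, and finally the nonstandard Mordell--Weil property through $(1)\Rightarrow(7)\Rightarrow(8)\Rightarrow(6)$; since from $(6)$ one returns to $(1)$, all eight statements lie in one cycle and are equivalent. The first triangle uses only the displayed estimate $n^{r}\le|E/nE(K)|\le n^{r}+n^{2}$ with $r=\rk E(K)$: a rank bound $r_{0}$ makes $|E/nE(K)|\le n^{r_{0}}+n^{2}$ uniform in $E$ for each fixed $n$, which is $(1)\Rightarrow(2)$; $(2)\Rightarrow(3)$ is the case $n=2$; and a uniform bound $M$ on $|E/2E(K)|$ forces $2^{\rk E(K)}\le M$, hence $\rk E(K)\le\log_{2}M$, which is $(3)\Rightarrow(1)$.

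To connect with ultrapowers I would express ``$|E/nE(K)|\le m$'' as a first-order property $\psi_{n,m}(A,B)$ of the coefficients $A,B$ of the curve, using the uniformly definable sets $nE$ and the uniformly definable relation $\sim_{E,n}$ from the text: $\psi_{n,m}(A,B)$ asserts that no $m+1$ points of $E$ are pairwise $\sim_{E,n}$-inequivalent. Then $(2)$ reads ``for each $n$ there is $M_{n}$ with $K\models\forall A\,B\,(4A^{3}+27B^{2}\neq0\rightarrow\psi_{n,M_{n}}(A,B))$'', and this transfers to $^{*}K$ because $^{*}K\equiv K$, so every elliptic curve over $^{*}K$ has $|E/nE({}^{*}K)|\le M_{n}$ for each $n$ and every nonprincipal $\CU$; this is $(2)\Rightarrow(4)$, and $(4)\Rightarrow(5)$, $(5)\Rightarrow(6)$ are immediate. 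For $(6)\Rightarrow(3)$ I would argue contrapositively: if $|E/2E(K)|$ is unbounded over elliptic curves over $K$, fix a bijection $e\colon I\to\BN$ and choose for each $i\in I$ an elliptic curve $E_{i}$ over $K$, with coefficients $A_{i},B_{i}$ say, such that $|E_{i}(K)/2E_{i}(K)|\ge e(i)$; then the curve $E$ with coefficients $A=(A_{i})$, $B=(B_{i})$ is an elliptic curve over $^{*}K$ by {\L}o\'s, while for each $N$ the set $\{i:K\models\psi_{2,N}(A_{i},B_{i})\}\subseteq\{i:e(i)\le N\}$ is finite, hence not in $\CU$; so $E/2E({}^{*}K)$ has more than $N$ classes for every $N$, is infinite, and contradicts $(6)$ for whichever $^{*}K$ occurs there.

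For the nonstandard Mordell--Weil property, assume $(1)$ and write an elliptic curve over $^{*}K$ as $E=(E_{i})$ with the $E_{i}$ elliptic curves over $K$, as remarked above. By Mordell--Weil $E_{i}(K)\cong\BZ^{r_{i}}\oplus E_{i}(K)_{\mathrm{tors}}$ with $r_{i}$ at most the uniform rank bound $r_{0}$, and $E_{i}(K)_{\mathrm{tors}}$, being a finite subgroup of the group of torsion points of $E_{i}$ over an algebraic closure of $K$, is generated by at most two elements; hence $E_{i}(K)$ needs at most $r_{0}+2$ generators, and Remark~\ref{ultraproduct_abeliangroup}(1) shows $E({}^{*}K)=\Pi_{\CU}E_{i}(K)$ is generated by at most $r_{0}+2$ elements as a $^{*}\BZ$-module, which is $(7)$; $(7)\Rightarrow(8)$ is trivial. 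For $(8)\Rightarrow(6)$, if $E({}^{*}K)$ is generated by $n$ elements as a $^{*}\BZ$-module, then since multiplication by $2\in\BZ\subseteq{}^{*}\BZ$ is the two-fold sum the $^{*}\BZ$-module quotient $E({}^{*}K)/2E({}^{*}K)$ coincides with the weak $2$nd Mordell--Weil group, and Remark~\ref{ultraproduct_abeliangroup}(2) bounds its size by $2^{n}$, so it is finite.

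The routine parts are the first triangle and the trivial implications; the ones needing care are the two {\L}o\'s transfers --- including the implicit checks that $E({}^{*}K)/nE({}^{*}K)$ is the ultraproduct of the $E_{i}(K)/nE_{i}(K)$ and that an ultraproduct of finite sets of bounded size stays finite by pigeonhole --- together with the one external input from the theory of elliptic curves, that a torsion subgroup has at most two generators. I expect $(6)\Rightarrow(3)$ to be the main obstacle: it is the only direction where one must manufacture a single nonstandard curve to witness the unboundedness of a whole family of standard curves, so one has to make sure the diagonal curve $E$ is genuinely nonsingular over $^{*}K$ and that the chosen pairwise inequivalent points on the $E_{i}$ really do assemble into distinct classes modulo $2E({}^{*}K)$.
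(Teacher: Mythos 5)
Your proof follows essentially the same route as the paper's: the first-order transfer of the uniformly definable weak $n$th Mordell--Weil conditions carries the standard/nonstandard bridge, the contrapositive compactness/diagonal argument handles $(6)\Rightarrow(3)$, and Remark~\ref{ultraproduct_abeliangroup} handles the nonstandard Mordell--Weil property in the same way. The one point where you genuinely simplify is $(1)\Rightarrow(7)$: you bound the number of generators of $E(K)_{\mathrm{tors}}$ by $2$ via the embedding into $E(\overline{K})_{\mathrm{tors}}\cong(\BQ/\BZ)^{2}$, which is elementary, whereas the paper invokes finiteness of the possible torsion subgroups over $K$ (Mazur/Merel), a far deeper input; your version is preferable here.
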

\begin{proof}
It is easy to check $(1)\Leftrightarrow(2)\Leftrightarrow (3)$, $(4)\Rightarrow(5)\Rightarrow(6)$, and $(7)\Rightarrow (8)$. It it enough to show $(3)\Rightarrow (4)$, $(6)\Rightarrow (3)$, $(1)\Rightarrow (7)$, and $(8)\Rightarrow (5)$. Let $E(A,B;x,y,z)$ be a formula 
\begin{equation*}
(z=1\wedge y^2=x^3+Ax+B)\vee(z=0\wedge x=0\wedge y=1)\wedge (-16(4A^3+27B^2)\neq 0),
\end{equation*}
which parametrizes all pairs of nonsingular elliptic curves $E(A,B):y^2=x^3+Ax+B$ and points in $E(A,B)$. Consider a two variable formula $\Phi_{n,m}(A,B)\equiv \exists \x_1,\ldots,\x_m\phi_{E(A,B),n,m}$ for each $m\ge 1$ which parametrizes all nonsingular elliptic curves whose the weak $n$th Mordell-Weil group has the cardinality at least $m$.\\

\smallskip

$(3)\Rightarrow (4)$. Suppose $(3)$ holds. Choose $n\ge 2$.Then there is $M_n >0$ such that $\BQ\models \forall A,B\ \neg \Phi_{n,m}(A,B)$ for all $m\ge M_n$. So for all $^*K\equiv K$, $^*K\models \forall A,B\ \neg \Phi_{n,m}(A,B)$ for all $m\ge M_n$ and any weak $n$th Mordel-Weil group over $^*K$ has cardinality less than $M_n$. So $(4)$ holds.\\

\smallskip

$(6)\Rightarrow (3)$. Suppose $(3)$ does not hold. Consider the countable set of formulas $\Phi_{2,m}(A,B)$. For each $m>0$, $\Phi_{2,m}(K)\neq\emptyset$. So any nonstandard rational number field $^*K$ contains an elements $({^*A},{^*B})$ such that for $^*K\models \Phi_{2,m}({^*}A,{^*}B)$ for any $m>0$ and $(6)$ is not true.\\

\smallskip

$(1)\Rightarrow (7)$. Suppose there is $C>0$ such that $\rk E(K)<C$ for each elliptic curve $E$ over $K$. There is $r>0$ such that $E(K)$ is generated by at most $r$-many elements for all elliptic curves $E$ over $K$ because there are only finitely many possibilities for torsion points of elliptic curves over $K$. Then by Remark \ref{ultraproduct_abeliangroup} (1), each Mordell-Weil group of elliptic curves over $^*K$ is finitely generated as $^*\BZ$-module.\\

\smallskip

$(8)\Rightarrow (5)$. Suppose nonstandard Mordell-Weil property hold for some $^*K$. By Remark \ref{ultraproduct_abeliangroup} (2), each $n$th weak Mordell-Weil group of elliptic curves over $^*K$ is finite and so weak Mordell-Weil property holds for $^*K$.
\end{proof}
%

\section{Factorization in $^*\BZ$}
In \cite{R}, it was first noted that the integer ring $\BZ$ is definable in $\BQ$. Let $N(x)$ be a formula defining $\BZ$ in $\BQ$. Then $^*\BQ$ also has a nonstandard integer ring $^*\BZ:=Z({^*}\BQ)$ corresponding to $\BZ$ in $\BQ$ as a definable subset in $^*\BQ$. By Lagrange theorem, the set of natural numbers $\BN$ is definable in $\BQ$ by the formula 
\begin{equation*}
N'(x)\equiv \exists y_1,y_2,y_3,y_4(\bigwedge_{1\le i\le 4}\limits Z(y_i)\wedge x=y_1^2+y_2^2+y_3^2+y_4^2),
\end{equation*}
and $^*\BQ$ also has the nonstandard natural numbers $^*\BN:=N({^*}\BQ)$. Then $^*\BZ$ inherits some basic arithmetic properties of $\BZ$ : The quotient field of $^*\BZ$ is $^*\BQ$, it is integrally closed in $^*\BQ$, its units are only $\pm 1$, and any finitely generated ideal is a principal ideal. Unlike $\BZ$, the nonstandard integer ring $^*\BZ$ need not be a Dedekind domain and not Noetherian so that $^*\BZ$ need not be a PID. 
\begin{example}
Let $^*\BQ$ be $\aleph_1$-saturated. Let $I=\bigcap_{i\in\omega}2^i{^*}\BZ$. Then $I$ is not a finite product of prime ideals and it is not finitely generated.
\end{example}
\noindent So (finite) factorization theorem does not hold for $^*\BZ$ any more. But each primes in $^*\BZ$ gives a valuation, and using these valuations we'll get an infinite version of factorization.

We start with defining a binary relation by a formula $x|y\equiv Z(x)\wedge Z(y)\wedge \exists z(Z(z)\wedge z\neq 0\wedge y=zx)$ saying that $x$ and $y$ are integers and $x$ divides $y$. Next we consider a formula defining primes as follow :
\begin{equation*}
P(x)\equiv N(x)\wedge x\neq 1 \wedge (\forall y\ y|x\rightarrow (y=\pm 1 
\vee y=\pm x)).
\end{equation*}
So, $P(\BQ)$ gives the set $\CP(\BN)$ of primes in $\BN$ and $P({^*}\BQ)$ is the set $\CP({^*}\BN)$ of primes in $^*\BZ$ corresponding $\CP(\BN)$. For each prime, we define the set of powers of a given prime. Consider a formula 
\begin{equation*}
\pw(x;y)\equiv N(x)\wedge N(y)\wedge(y=1 \vee \forall z(N(z)\wedge z\neq \pm1\wedge  z|y\rightarrow x|z)).
\end{equation*}
For each $p\in\CP({^*}\BN)$ let $p^{^*\BN}:=\pw(p;{^*}\BN)$ be the set of $1$ and elements in $^*\BN$ divisible by only $p$ and consider the function $d_p: {^*}\BZ\setminus\{0\}\rightarrow p^{^*\BN}$ sending each $x$ to $y$ such that $y|x$ but $\neg(py|x)$, which is definable by a formula
\begin{equation*}
d_p(x,y)\equiv Z(x)\wedge \pw(p;y)\wedge y|x\wedge \neg(py|x).
\end{equation*}
For a valuation induced from $p\in\CP({^*}\BN)$, we have to extend $d_p$ to $^*\BQ\setminus\{0\}$. Let $\pw'(x;y)\equiv \pw(x;y)\vee \exists z (\pw(x;z)\wedge zy=1)$. Let $p^{^*\BZ}:=\pw'(p;{^*}\BQ)=p^{^*\BN}\cup\{1/x|x\in p^{^*\BN}\}$. Note that after ordering $<_p$ on $p^{^*\BZ}$ as $a<_p b$ if $b/a\in p^{^*\BN}$, $(p^{^*\BZ},\times,<_p)$ forms an ordered group. Extend $d_p$ to the map from $^*\BQ\setminus\{0\}$ into $p^{^*\BZ}$ by sending $a/b$ with $a,b\in {^*}\BZ$ into $d_p(a)/d_p(b)$, which is a surjective group homomorphism from $(^*\BQ\setminus\{0\},\times)$ to $(p^{^*\BZ},\times)$. Then for $a,b\in {^*\BQ}\setminus\{0\}$ with $a+b\neq 0$, $d_p(a+b)\ge \min(d_p(a),d_p(b))$. Thus $d_p:\ {^*}\BQ\setminus \{0\}\rightarrow p^{^*\BZ}$ is a valuation, which is definable by a formula
\begin{equation*}
\begin{array}{c c l}
d_p'(x,y)&\equiv& x\neq 0\wedge \exists x_1x_2y_1y_2[x_2x=x_1\\
&\wedge& d_p(x_1,y_1)\wedge d_p(x_2,y_2)\\
&\wedge& y_2y=y_1].
\end{array}
\end{equation*}
We also have a Euclidean absolute value in $^*\BQ$. Consider an order $<$ on $^*\BZ$ as $a<b$ if $b-a\in {^*}\BN$ and extend $<$ on $^*\BQ$ by defining $a_1/b_1<a_2/b_2$ with $a_1,a_2\in {^*}\BZ$ and $b_1,b_2\in {^*}\BN$ if $a_2b_1-a_1b_2\in {^*}\BN$. This ordering is definable by a formula
\begin{equation*}
\begin{array}{c c l}
x<y&\equiv& \exists x_1x_2y_1y_2\ [Z(x_1)\wedge Z(y_1)\wedge N(x_2)\wedge N(y_2)\\
&\wedge& xx_2=x_1\wedge yy_2=y_1\\
&\wedge& N(y_1x_2-x_1y_2) ].
\end{array}
\end{equation*}
Using these valuations induced from primes and the Euclidean absolute value, we get the following factorization.
\begin{theorem}\label{factorization}
For $a_0,a_1\in {^*}\BQ\setminus\{0\}$,\begin{center}
 $a_0=a_1$ iff $d_p(a_0)=d_p(a_1)$ for all $p\in \CP({^* \BN})$ and $a_0a_1>0$.
\end{center}
In other words, there is a injective group homomorphism from ${^*}\BQ\setminus\{0\}$ to $\Pi_{p\in \CP({^*}\BN)} p^{^*\BZ}\times \{\pm 1\}$ sending $a\mapsto (d_p(a),\sgn(a))$, where $\sgn$ is the projection map from ${^*}\BQ\setminus\{0\}$ to ${^*}\BQ\setminus\{0\}/{^*}\BQ^{>0}\cong \{\pm 1\}$ and ${^*}\BQ^{>0}:=\{r\in {^*}\BQ|r>0\}$. 
\end{theorem}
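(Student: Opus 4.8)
The plan is to deduce the statement from the elementary equivalence $\BQ\equiv {}^*\BQ$ together with unique factorization in $\BZ$. The left-to-right implication is immediate: if $a_{0}=a_{1}$ then $d_{p}(a_{0})=d_{p}(a_{1})$ for every $p\in\CP({}^*\BN)$ and $a_{0}a_{1}=a_{0}^{2}>0$. For the converse I would first record that every ingredient appearing in the equivalence is uniformly first-order definable over $^*\BQ$ in the language of rings: the set of primes by $P(x)$, the valuation map $d_{p}$ by the formula $d_{p}'(x,y)$ (read as a formula in the three free variables $p,x,y$), and the order $<$ by the displayed formula above. Consequently
\[
\sigma:\quad \forall a_{0}\,\forall a_{1}\,\Big[\big(a_{0}\neq 0 \wedge a_{1}\neq 0 \wedge \forall p\,(P(p)\to d_{p}(a_{0})=d_{p}(a_{1})) \wedge a_{0}a_{1}>0\big)\to a_{0}=a_{1}\Big]
\]
is a single first-order sentence, where ``$d_{p}(a_{0})=d_{p}(a_{1})$'' abbreviates $\exists y\,(d_{p}'(a_{0},y)\wedge d_{p}'(a_{1},y))$ (this is meaningful because $d_{p}'$ defines a function and $a_{0},a_{1}\neq 0$). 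Since the canonical embedding $\iota\colon\BQ\hookrightarrow {}^*\BQ$ is elementary, it suffices to verify $\BQ\models\sigma$, and then the first assertion of the theorem is exactly $^*\BQ\models\sigma$.

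To check $\BQ\models\sigma$ one takes $a_{0},a_{1}\in\BQ\setminus\{0\}$ satisfying the hypothesis. Each $d_{p}$ is a multiplicative homomorphism $({}^*\BQ\setminus\{0\},\times)\to(p^{{}^*\BZ},\times)$ (and likewise over $\BQ$), so $d_{p}(a_{0}/a_{1})=d_{p}(a_{0})d_{p}(a_{1})^{-1}=1$ for every prime $p$; over $\BQ$ this says the ordinary $p$-adic valuation of $a_{0}/a_{1}$ is $0$ for all $p$, whence $a_{0}/a_{1}=\pm1$ by unique factorization in $\BZ$. The condition $a_{0}a_{1}>0$ forces the sign to be $+1$, so $a_{0}=a_{1}$, giving $\BQ\models\sigma$ and hence the first statement. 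The ``in other words'' reformulation is then formal: the map $\Psi\colon {}^*\BQ\setminus\{0\}\to \prod_{p\in\CP({}^*\BN)}p^{{}^*\BZ}\times\{\pm1\}$, $a\mapsto\big((d_{p}(a))_{p},\sgn(a)\big)$, is a group homomorphism because each coordinate $d_{p}$ and the projection $\sgn$ are group homomorphisms on $({}^*\BQ\setminus\{0\},\times)$; and if $a\in\ker\Psi$ then $d_{p}(a)=1=d_{p}(1)$ for all $p$ and $a=a\cdot 1>0$, so the equivalence just proved (with $a_{0}=a$, $a_{1}=1$) gives $a=1$, so $\Psi$ is injective.

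The one place that wants a little care is making the clause ``for all primes $p$'' genuinely first-order — it is, precisely because primes are uniformly definable and $d_{p}$ is given by the single formula $d_{p}'$ with $p$ occurring as a variable, so that in $^*\BQ$ the quantifier $\forall p\,(P(p)\to\cdots)$ ranges over exactly $\CP({}^*\BN)$ — together with the verification that $\sigma$ holds in $\BQ$, which is nothing more than unique factorization plus the sign bookkeeping; there are no further transfer subtleties beyond $\BQ\preceq {}^*\BQ$. As an alternative I could argue coordinatewise in the ultrapower: writing $a_{0}=(a_{0,i})$, $a_{1}=(a_{1,i})$, if $a_{0}\neq a_{1}$ then $\{i:a_{0,i}\neq a_{1,i}\}\in\CU$, and on that set either the signs differ for $\CU$-many $i$ (contradicting $a_{0}a_{1}>0$) or some standard prime $q_{i}$ satisfies $v_{q_{i}}(a_{0,i})\neq v_{q_{i}}(a_{1,i})$ for $\CU$-many $i$, and then $(q_{i})\in\CP({}^*\BN)$ separates $a_{0}$ and $a_{1}$ under the $d_{p}$'s.
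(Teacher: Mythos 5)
Your argument is essentially the same as the paper's: both encode the statement as a single first-order sentence in the ring language (using the uniform definability of $P$, $d_p'$, and $<$), observe that it holds in $\BQ$ by unique factorization, and transfer it to $^*\BQ$ by elementary equivalence. You simply spell out the verification over $\BQ$ and the bookkeeping for the ``in other words'' reformulation, which the paper leaves implicit.
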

\begin{proof}
Theorem \ref{factorization} holds for $\BQ$. So, the following sentence
\begin{equation*}
\begin{array}{c c l}
\phi&\equiv&\forall xy[(xy\neq 0\\
&\wedge& \forall z( P(z)\rightarrow \forall w(d_z(x,w)\leftrightarrow d_z(y,w)) )\\
&\wedge& (x>0\leftrightarrow y>0) )\\
&\rightarrow& x=y]
\end{array}
\end{equation*}
is true in $\BQ$, and so is in $^*\BQ$ because $\BQ\equiv {^*}\BQ$.
\end{proof}
\noindent As corollary, $^*\BZ$ is the intersection of all $d_p$-valuation rings for $p\in\CP({^*}\BN)$.
\begin{corollary}\label{element_primelocal}
Let $^*\BZ_p$ be the $d_p$-valuation ring for $p\in\CP({^*}\BN)$. Then,
\begin{equation*}
{^*\BZ}=\bigcap_{p\in\CP({^*}\BN)}\limits {^*}\BZ_p.
\end{equation*}
\end{corollary}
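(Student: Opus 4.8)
The plan is to prove the two inclusions of $^*\BZ=\bigcap_{p\in\CP({}^*\BN)}{}^*\BZ_p$ separately, obtaining the nontrivial one by transfer from $\BQ$ in the same way as Theorem \ref{factorization}. The inclusion ${}^*\BZ\subseteq\bigcap_{p}{}^*\BZ_p$ needs no transfer: for nonzero $a\in{}^*\BZ$ the defining formula $d_p(x,y)$ already forces $d_p(a)\in p^{^*\BN}$, i.e.\ $d_p(a)\ge_p 1$, so $a$ lies in the $d_p$-valuation ring ${}^*\BZ_p$ for every prime $p$, and $0$ lies in all of them. For the reverse inclusion, the key observation is that, uniformly in a prime $p$, the condition ``$x\neq 0$ and $x\in{}^*\BZ_p$'' is expressed by $\exists y\,(\pw(p;y)\wedge d_p'(x,y))$, saying $d_p(x)\in p^{^*\BN}$; hence the statement to be proved is the interpretation in ${}^*\BQ$ of the single first-order sentence
\[
\psi\;\equiv\;\forall x\,\bigl[\bigl(x\neq 0\wedge\forall z\,(P(z)\to\exists y\,(\pw(z;y)\wedge d_z'(x,y)))\bigr)\to Z(x)\bigr].
\]
Read in $\BQ$, $\psi$ asserts exactly the classical fact $\BZ=\bigcap_p\BZ_{(p)}$; since $\BQ\equiv{}^*\BQ$, $\psi$ holds in ${}^*\BQ$, which gives $\bigcap_{p\in\CP({}^*\BN)}{}^*\BZ_p\subseteq{}^*\BZ$.

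Alternatively, I would deduce the corollary directly from Theorem \ref{factorization}. Given a nonzero $a\in\bigcap_p{}^*\BZ_p$, replace $a$ by $-a$ if necessary so that $a>0$, and write $a=b/c$ with $b,c\in{}^*\BN$ and $c\neq 0$. The hypothesis $d_p(a)\in p^{^*\BN}$ for all $p$ says precisely that $d_p(c)\mid d_p(b)$ in ${}^*\BZ$ for all $p$, and the (again first-order, hence transferable) fact that this forces $c\mid b$ yields $b'\in{}^*\BZ$ with $b=cb'$. Then $d_p(b')=d_p(a)$ for every $p$ and $b'>0$, so $a=b'\in{}^*\BZ$ by Theorem \ref{factorization}.

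The only step that requires any care — and so the main, though modest, obstacle — is confirming that ``$x$ belongs to every ${}^*\BZ_p$'' really is one first-order formula, uniform over the nonstandard-indexed family $\{{}^*\BZ_p\}_{p\in\CP({}^*\BN)}$, rather than an infinite conjunction. Once one has the definability of $\pw$, $d_p'$, and $P$ already set up in the previous section, this is clear, and checking that $\psi$ (or the divisibility statement used in the alternative argument) holds in $\BQ$ is routine.
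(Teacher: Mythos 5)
The paper states Corollary \ref{element_primelocal} without an explicit proof, presenting it as an immediate consequence of the machinery around Theorem \ref{factorization}. Your first argument --- packaging ``$x$ lies in every $^*\BZ_p$'' into a single first-order sentence $\psi$ via the uniform definability of $P$, $\pw$ and $d_p'$, observing that $\psi$ expresses $\BZ=\bigcap_p\BZ_{(p)}$ in $\BQ$, and then transferring --- is exactly the style of argument the paper uses to prove Theorem \ref{factorization}, so it supplies the omitted proof in precisely the intended way; the care you take to check that the quantification over primes is genuinely internal (with the prime $z$ as a bound variable rather than a metavariable) is the only real content and you handle it correctly. Your second route is also correct, though slightly redundant: once you obtain $b'\in{^*\BZ}$ with $b=cb'$, the identity $a=b/c=b'$ already lands you in $^*\BZ$, so the closing appeal to Theorem \ref{factorization} to identify $a$ with $b'$ is unnecessary. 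Both proofs are sound; the first matches the paper's implicit approach most closely.
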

\begin{remark}
From Corollary \ref{element_primelocal}, it can be expected that for two ideals $I_1$ and $I_2$ in $^*\BZ$, if $I_1{^*\BZ_p}=I_2{^*\BZ_p}$ for all $p\in\CP({^*\BN})$, then $I_1=I_2$ but unfortunately this fails. We will see later that there is a maximal ideal $\fm$ in $^*\BZ$ which is not contained in $p{^*\BZ}$ for any $p\in \CP({^*\BN})$. This maximal ideal satisfies $\fm{^*\BZ_p}={^*\BZ_p}$ for all $p\in \CP({^*\BN})$ but $\fm\neq {^*\BZ}$.
\end{remark}

\section{Prime ideals in $^*\BZ$}
In this section, we study the prime ideals in $^*\BZ$.
Let $I$ be a countable index set and let $\CU$ be a nonprincipal ultrafilter. From now on, we fix an ultrapower $^*\BQ:=\Pi_{\CU}\BQ$ of $\BQ$ so that $^*\BQ$ is a nonstandard rational number field which is $\aleph_1$-saturated, and the nonstandard integer ring $^*\BZ$ of $^*\BQ$ is also the ultrapower of $\BZ$. At first, we describe maximal ideals in $^*\BZ$. We define {\em a prime factor} for each elements in $^*\BZ$.
\begin{definition}\label{primefactor}
Let $n\in {^*\BZ}$. {\em The prime factor $\pr(n)$ of $n$} is the set of primes dividing $n$, that is, $\pr(n):=\{p\in\CP({^*}\BN)|\ p|n\}$. Denote $\pr({^*\BZ})$ for the set of all prime factors of elements in $^*\BZ$. Note that $\pr(\pm 1)=\emptyset$, $\pr(0)=\CP({^*\BZ})$, and $\emptyset\subsetneq \pr(n)\subsetneq \CP({^*\BZ})$ for $n\neq \pm 1, 0$.
\end{definition}
\begin{definition}\label{filter_on_primefactor}
\begin{enumerate}
	\item We say a subset $\mathcal{F}$ of $\pr({^*\BZ})$ is {\em a filter on $\pr({^*\BZ})$} if
	\begin{enumerate}
		\item $\emptyset \notin \mathcal{F}$;
		\item for $\pr(n),\pr(m)\in \mathcal{F}$, $\pr(n)\cap\pr(m)$ is in $\mathcal{F}$; and
		\item for $\pr(n)\in F$ and $S\subset\CP({^*\BN})$, if $\pr(n)\subset S$ and $S=\pr(m)$ for some $m$, then $S\in \mathcal{F}$.
	\end{enumerate}
	\item A filter $\mathcal{F}$ is {\em maximal} if there is no filter properly containing $F$.
	\item A filter $\mathcal{F}$ is {\em principal} if there is $n\in{^*\BZ}$ such that for $S\in\pr({^*}\BZ)$
	\begin{center}
	$S\in F$ if and only if $\pr(n)\subset S$.
	\end{center}
In this case, we write $\mathcal{F}=\mathcal{F}(n)$.
\end{enumerate}
\end{definition}
\begin{remark}\label{gcd}
For $n,m\in{^*}\BZ$, there is $l\in{^*}\BN$ such that $n{^*}\BZ+m{^*}\BZ=l{^*}\BZ$ and such an element is uniquely determined. We call such $l$ a {\em great common divisor of $n$ and $m$}, denoted by $\gcd(n,m)$. Then for any $n,m\in{^*}\BZ$, $\pr(n)\cap\pr(m)=\pr(\gcd(n,m))$. For $n,m \in {^*\BZ}$, if $n|m$, then $\pr(n)\subset \pr(m)$ but the converse does not need to hold. A filter of the form $\mathcal{F}(p)$ for some $p\in\CP({^*\BN})$ is maximal.
\end{remark}
\begin{definition}\label{filter_ideal}
\begin{enumerate}
	\item For an ideal $I$ in $^*\BZ$, define $\mathcal{F}(I):=\{\pr(n)|\ n\in I\}$.
	\item For a filter $\mathcal{F}$ on $\CP({^*\BN})$, define $I(\mathcal{F}):=\{n\in{^*\BZ}|\ \pr(n)\in \mathcal{F}\}$.
\end{enumerate}
\end{definition}
\begin{proposition}\label{basics_filter_ideal}
\begin{enumerate}
	\item For a proper ideal $I$ in $^*\BZ$, $\mathcal{F}(I)$ is a filter;
	\item For a filter $\mathcal{F}$ on $\CP({^*\BN})$, $I(\mathcal{F})$ is an ideal generated by elements $n$ in $^*\BZ$ such that
	\begin{equation*}
	\pr(n)\in\mathcal{F}\mbox{ and } d_p(n) = \twopartdef
{p}      {p\in\pr(n)}
{1}      {p\notin\pr(n)};
	\end{equation*}
	\item $\mathcal{F}(I_1)\subset\mathcal{F}(I_2)$ for two ideals $I_1\subset I_2$ in $^*\BZ$;
	\item $I(\mathcal{F}_1)\subset I(\mathcal{F}_2)$ for two filter $\mathcal{F}_1\subset \mathcal{F}_2$ on $\CP({^*\BN})$; and
	\item $I'\subset I(\mathcal{F}(I'))$ and $\mathcal{F}'\subset \mathcal{F}(I(\mathcal{F}'))$ for an ideal $I'$ in $^*\BZ$ and a filter $\mathcal{F}'$ on $\CP({^*\BN})$.
\end{enumerate}
\end{proposition}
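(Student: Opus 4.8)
The plan is to dispatch the five assertions in order, leaning on two elementary facts about $^*\BZ$ obtained by transfer from $\BZ$: (i) primes of $^*\BZ$ are prime elements, so that $\pr(nm)=\pr(n)\cup\pr(m)$ for nonzero $n,m\in{^*\BZ}$; and (ii) every nonzero $n\in{^*\BZ}$ admits a positive squarefree ``radical'' $r$, i.e.\ an element with $r\mid n$, $\pr(r)=\pr(n)$, $d_p(r)=p$ for $p\mid r$, and $d_p(r)=1$ for $p\nmid r$ --- the defining property of $r$ is a first-order sentence true in $\BZ$, hence in the elementary extension $^*\BZ$. I will also use Remark~\ref{gcd} freely, in particular $\pr(n)\cap\pr(m)=\pr(\gcd(n,m))$, where $\gcd(n,m)$ generates the (finitely generated, hence principal) ideal $n{^*\BZ}+m{^*\BZ}$.

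For (1): since $I$ is proper, $\pm1\notin I$, so $\emptyset=\pr(\pm1)\notin\mathcal{F}(I)$; if $\pr(n),\pr(m)\in\mathcal{F}(I)$ then $\gcd(n,m)\in n{^*\BZ}+m{^*\BZ}\subseteq I$, so $\pr(n)\cap\pr(m)=\pr(\gcd(n,m))\in\mathcal{F}(I)$; and if $\pr(n)\in\mathcal{F}(I)$ with $\pr(n)\subseteq S=\pr(m)$, then $nm\in I$ and $\pr(nm)=\pr(n)\cup\pr(m)=S$, so $S\in\mathcal{F}(I)$. For (2) I first check that $I(\mathcal{F})$ is an ideal: as a filter is nonempty and upward closed, $\CP({^*\BN})=\pr(0)\in\mathcal{F}$, so $0\in I(\mathcal{F})$; if $n,m\in I(\mathcal{F})$ then $\gcd(n,m)\mid n+m$, whence $\pr(\gcd(n,m))\subseteq\pr(n+m)$ and upward closure gives $n+m\in I(\mathcal{F})$; and $n\mid rn$ gives $\pr(n)\subseteq\pr(rn)$, so $rn\in I(\mathcal{F})$ for $r\in{^*\BZ}$.

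It remains, for (2), to identify $I(\mathcal{F})$ with the ideal $(G)$ generated by the set $G$ described in the statement (the squarefree elements whose prime factor lies in $\mathcal{F}$). Since $g\in G$ satisfies $\pr(g)\in\mathcal{F}$, we have $G\subseteq I(\mathcal{F})$, hence $(G)\subseteq I(\mathcal{F})$. Conversely, for nonzero $n\in I(\mathcal{F})$ the radical $r$ of $n$ satisfies $\pr(r)=\pr(n)\in\mathcal{F}$, so $r\in G$, and $r\mid n$, so $n\in(G)$; and $0\in(G)$ as soon as $G\neq\emptyset$, the only exceptional case being $\mathcal{F}=\{\CP({^*\BN})\}$, in which $I(\mathcal{F})=\{0\}=(G)$ trivially. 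Hence $I(\mathcal{F})=(G)$.

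Parts (3)--(5) are then direct from the definitions: $\pr(n)\in\mathcal{F}(I_1)\Rightarrow n\in I_1\subseteq I_2\Rightarrow\pr(n)\in\mathcal{F}(I_2)$; $n\in I(\mathcal{F}_1)\Rightarrow\pr(n)\in\mathcal{F}_1\subseteq\mathcal{F}_2\Rightarrow n\in I(\mathcal{F}_2)$; $n\in I'\Rightarrow\pr(n)\in\mathcal{F}(I')\Rightarrow n\in I(\mathcal{F}(I'))$; and for $S\in\mathcal{F}'$ we write $S=\pr(m)$, so $m\in I(\mathcal{F}')$ and therefore $S=\pr(m)\in\mathcal{F}(I(\mathcal{F}'))$. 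The one step with genuine content is the generation claim in (2): the main obstacle is producing, for an arbitrary member of $I(\mathcal{F})$, a divisor of the prescribed squarefree shape, and the cleanest route is to transfer the radical construction from $\BZ$, taking a little care with the element $0$ and with the degenerate filter $\{\CP({^*\BN})\}$.
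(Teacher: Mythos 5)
Your proof is correct and follows essentially the same route as the paper's: parts (1), (3), (4), (5) by the same direct arguments via Remark~\ref{gcd} and the multiplicativity of $\pr$, and part (2) by first checking $I(\mathcal{F})$ is an ideal and then identifying it as generated by the squarefree ``radicals'' $n_a$ with $\pr(n_a)\in\mathcal{F}$. You are somewhat more explicit than the paper about why the radical exists (transfer of a first-order sentence from $\BZ$) and about edge cases (the element $0$, the degenerate filter), but these are expositional refinements rather than a different argument.
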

\begin{proof}
(1) Let $I$ be a proper ideal in $^*\BZ$. Since $I$ is a proper ideal, so for all $a\in I$, $\pr(a)\neq \emptyset$. For $a,b\in I$, by Remark \ref{gcd}, $\pr(a)\cap\pr(b)=\pr(\gcd(a,b))$ and $\gcd(a,b)\in I$ so that $\pr(a)\cap\pr(b)\in \CF(I)$. Choose $a\in I$ and $b\in {^*\BZ}$ with $\pr(a)\subset \pr(b)$. Note that $\pr(b)=\pr(ab)$. Since $ab\in I$, $\pr(ab)$ is in $\CF(I)$ and so is $\pr(b)$. Thus $\CF(I)$ is a filter.\\

(2) Let $\CF$ be a filter on $\CP({^*\BN})$. Note that for any $a\in {^*\BZ}$, there is a unique $n_a$ such that $\pr(a)=\pr(n_a)$ and $d_p(n_a) = \twopartdef
{p}      {p\in\pr(n_a)}
{1}      {p\notin\pr(n_a)}$. So it is enough to show that $I(\CF)$ is an ideal. Take $a,b\in I(\CF)$ and $c\in {^*\BZ}$. Then $\pr(ca+b)=\pr(ca)\cap\pr(b)$. Since $\pr(a)\subset \pr(ca)$, the prime factor $\pr(ca+b)$ of $ca+b$ contains $\pr(a)\cap \pr(b)$. Since $\CF$ is a filter and $\pr(a)\cap \pr(b)\in \CF$, the prime factor $\pr(ca+b)$ is also in $\CF$ and thus $ca+b\in I(\CF)$. So $I(\CF)$ is an ideal.\\

(3) and (4) come from the definitions.\\

(5) Let $I'$ be an ideal in $^*\BZ$. Take $a\in I'$ arbitrary. The prime factor $\pr(a)$ of $a$ is in $\CF(I')$ by the definition. Again by definition of $I(\CF(I'))$, $a$ is in $I(\CF(I'))$. Therefore $I'$ is a subset of $I(\CF(I'))$. Let $\CF'$ be a filter on $\CP({^*\BN})$. Choose $a\in {^*\BN}$ such that its prime factor is in $\CF'$. In other word, $a$ is in $I(\CF')$. Then the prime factor $\pr(a)$ is in $\CF(I(\CF'))$. Thus $\CF'$ is a subset of $\CF(I(\CF'))$.
\end{proof}

\begin{theorem}\label{maximalfilter_maximalideal}
There is one-to-one correspondence between maximal ideals and maximal filters by the map $\fm\mapsto \mathcal{F}(\fm)$. Moreover, for a maximal ideal $\fm$, $\mathcal{F}(\fm)$ is principal if and only if $\fm=p{^*\BZ}$ for some $p\in\CP({^*\BN})$.
\end{theorem}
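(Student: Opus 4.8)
The plan is to realize $\fm \mapsto \mathcal{F}(\fm)$ and $\mathcal{F}\mapsto I(\mathcal{F})$ as mutually inverse order isomorphisms by pushing the Galois connection of Proposition \ref{basics_filter_ideal} to its ``closed'' elements. Two observations drive everything. First, \emph{every} filter $\mathcal{F}$ on $\pr({^*\BZ})$ is closed: $\mathcal{F}(I(\mathcal{F})) = \{\pr(n) : \pr(n)\in\mathcal{F}\}$, and since by definition a filter consists of prime factors of elements of $^*\BZ$, this set is exactly $\mathcal{F}$ (the reverse inclusion being Proposition \ref{basics_filter_ideal}(5)). Second, if $I$ is a proper ideal then $I(\mathcal{F}(I))$ is again \emph{proper}: having $1\in I(\mathcal{F}(I))$ would force $\emptyset=\pr(\pm 1)\in\mathcal{F}(I)$, impossible since $\mathcal{F}(I)$ is a filter by Proposition \ref{basics_filter_ideal}(1).

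From the second observation, for a maximal ideal $\fm$ we have $\fm\subseteq I(\mathcal{F}(\fm))\subsetneq {^*\BZ}$, so maximality gives $I(\mathcal{F}(\fm))=\fm$; in particular $\fm\mapsto\mathcal{F}(\fm)$ is injective on maximal ideals. Next I would check that it sends maximal ideals to maximal filters: if $\mathcal{F}(\fm)\subsetneq\mathcal{G}$ for a filter $\mathcal{G}$, then $\fm = I(\mathcal{F}(\fm))\subseteq I(\mathcal{G})\subsetneq {^*\BZ}$, so $\fm=I(\mathcal{G})$ by maximality of $\fm$, whence $\mathcal{G}=\mathcal{F}(I(\mathcal{G}))=\mathcal{F}(\fm)$ by the first observation, contradicting $\mathcal{F}(\fm)\subsetneq\mathcal{G}$. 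Symmetrically, if $\mathcal{F}$ is a maximal filter then $I(\mathcal{F})$ is a proper ideal, and any proper ideal $J\supsetneq I(\mathcal{F})$ yields $\mathcal{F}=\mathcal{F}(I(\mathcal{F}))\subseteq\mathcal{F}(J)$ with $\mathcal{F}(J)$ a filter, so maximality of $\mathcal{F}$ forces $\mathcal{F}=\mathcal{F}(J)$ and then $J\subseteq I(\mathcal{F}(J))=I(\mathcal{F})$, a contradiction; hence $I(\mathcal{F})$ is maximal, and $\mathcal{F}(I(\mathcal{F}))=\mathcal{F}$. This gives the bijection, with inverse $\mathcal{F}\mapsto I(\mathcal{F})$.

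For the ``moreover'', the direction $\fm=p{^*\BZ}\Rightarrow\mathcal{F}(\fm)$ principal is immediate: $\mathcal{F}(p{^*\BZ})=\{\pr(n):p\mid n\}=\{S\in\pr({^*\BZ}):\pr(p)\subseteq S\}=\mathcal{F}(p)$. For the converse, assume $\fm$ is maximal with $\mathcal{F}(\fm)=\mathcal{F}(n)$ principal. Since $\mathcal{F}(\fm)$ is a filter, $\emptyset\notin\mathcal{F}(n)$, so $n\neq\pm 1$; and $n\neq 0$ since $\mathcal{F}(0)=\{\CP({^*\BN})\}$ is not maximal (it lies properly inside $\mathcal{F}(p)$ for each prime $p$). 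Hence $\emptyset\subsetneq\pr(n)\subsetneq\CP({^*\BN})$, so fix a prime $p\in\pr(n)$. Then $\pr(p)=\{p\}\subseteq\pr(n)$ gives $\mathcal{F}(n)\subseteq\mathcal{F}(p)$; as $\mathcal{F}(p)$ is a filter (indeed maximal, by Remark \ref{gcd}) and $\mathcal{F}(\fm)=\mathcal{F}(n)$ is maximal, $\mathcal{F}(\fm)=\mathcal{F}(p)$, and therefore $\fm=I(\mathcal{F}(\fm))=I(\mathcal{F}(p))=p{^*\BZ}$.

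I do not expect a deep obstacle; the work is in getting the Galois-connection bookkeeping exactly right --- in particular the \emph{two-sided} closure (that filters are always closed, and that $I(\mathcal{F}(\cdot))$ preserves properness), which is what upgrades the monotone maps of Proposition \ref{basics_filter_ideal} to a genuine bijection rather than a one-sided Galois insertion. The only other point requiring attention is excluding the degenerate values $n=0,\pm 1$ in the converse half of the ``moreover'', which is handled by noting that $\mathcal{F}(\fm)$ is not merely a filter but a maximal one.
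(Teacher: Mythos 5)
Your proof is correct and uses the same Galois-connection scaffolding as the paper, but you traverse it somewhat differently in ways worth noting. Your first observation --- that $\mathcal{F}(I(\mathcal{F})) = \mathcal{F}$ for \emph{every} filter $\mathcal{F}$, not just maximal ones, simply because a filter is by definition a subset of $\pr({^*\BZ})$ --- is sharper than what the paper records: the paper's step (**) derives $\mathcal{F}'=\mathcal{F}(I(\mathcal{F}'))$ only for maximal filters by invoking their maximality. Your version makes the ``filters are always closed'' half of the Galois insertion explicit, which both streamlines the surjectivity argument and lets you verify, cleanly and symmetrically, that $\fm\mapsto\mathcal{F}(\fm)$ actually lands in maximal filters --- a well-definedness point the paper's proof leaves implicit (it establishes injectivity from (*) and surjectivity from (**)+($\dagger$) without explicitly checking that $\mathcal{F}(\fm)$ is maximal). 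On the other hand, for the step that $I(\mathcal{F}')$ is a maximal ideal when $\mathcal{F}'$ is a maximal filter, the paper gives a concrete Bezout-style argument: it shows ${^*\BZ}/I(\mathcal{F}')$ is a field by producing, for $n\notin I(\mathcal{F}')$, an element $m\in I(\mathcal{F}')$ with $\pr(m)\cap\pr(n)=\emptyset$ and thence a multiplicative inverse of $n$ in the quotient. Your route is purely order-theoretic --- any proper ideal $J\supsetneq I(\mathcal{F})$ forces $\mathcal{F}(J)=\mathcal{F}$ and then $J\subseteq I(\mathcal{F})$ --- which trades the arithmetic for bookkeeping and needs no new lemma beyond Proposition \ref{basics_filter_ideal}. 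Finally, your handling of the ``moreover'' direction is more careful than the paper's: you explicitly rule out $n=0,\pm 1$ using that $\mathcal{F}(\fm)$ is a \emph{maximal} filter (so $\mathcal{F}(0)=\{\CP({^*\BN})\}$ is excluded), whereas the paper's last paragraph is terse and essentially only treats the direction $\fm=p{^*\BZ}\Rightarrow\mathcal{F}(\fm)$ principal plus an appeal to the bijection.
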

\begin{proof}
Note that for a maximal ideal $\fm$, $\fm\subseteq I(\mathcal{F}(\fm))$, and $1\notin I(\mathcal{F}(\fm))$ since $\emptyset\notin \mathcal{F}(\fm)$. By maximality of $\fm$, $\fm=I(\mathcal{F}(\fm))$ (*). For a maximal filter $\mathcal{F}'$, $\mathcal{F}'\subset \mathcal{F}(I(\mathcal{F}'))$, and $\emptyset \notin \mathcal{F}(I(\mathcal{F}'))$ since $1\notin I(\mathcal{F}')$. By maximality of $\mathcal{F}'$, $\mathcal{F}'=\mathcal{F}(I(\mathcal{F}'))$ (**). We claim that $I(\mathcal{F}')$ is a maximal ideal for a maximal filter $\mathcal{F}'$($\dagger$). Fix an maximal filter $\mathcal{F}'$. It is enough to show $^*\BZ/I(\mathcal{F}')$ is a field. Take $n+I(\mathcal{F}')$ with $n\notin I(\mathcal{F}')$. Then $\pr(n)$ is not in $\mathcal{F}'$. By maximality of $\mathcal{F}'$, there is $m\in I(\mathcal{F}')$ such that $\pr(m)\cap\pr(n)=\emptyset$ so that $I(\mathcal{F}')+n{^*}\BZ$. Thus there exists $k\in {^*\BZ}$ such that $nk+I(\mathcal{F}')=1+I(\mathcal{F}')$ and $k+I(\mathcal{F}')$ is the inverse of $n+I(\mathcal{F}')$. Therefore $^*\BZ/I(\mathcal{F}')$ is a field and $I(\mathcal{F}')$ is an maximal ideal.

We show this correspondence is injective. Let $\fm_1$ and $\fm_2$ be two maximal ideals. Suppose $\mathcal{F}(\fm_1)=\mathcal{F}(\fm_2)$. Since $\mathcal{F}(\fm_1)=\mathcal{F}(\fm_2)$, $I(\mathcal{F}(\fm_1))=I(\mathcal{F}(\fm_2))$ and  $\fm_1=\fm_2$ by (*). The surjectivity of the correspondence comes from (**) and ($\dagger$).

We now show the moreover part. Any principal maximal ideal is of the form of $p{^*\BZ}$ for some prime $p$ and the filter $\mathcal{F}_q$ is maximal for any prime $q$. Fix a prime $p$, $p{^*\BZ}\subset I(\mathcal{F}_p)$ and $\mathcal{F}_p\subset \mathcal{F}(p{^*\BZ})$. Thus $\mathcal{F}(p{^*\BZ})=\mathcal{F}_p$ and $I(\mathcal{F}_p)=p{^*\BZ}$. So, a maximal ideal $\fm$ is principal if and only if $\mathcal{F}(\fm)$ is principal. 
\end{proof}
We recall the classification of prime ideals in the ultrarpoducts of Dedekind domains in \cite{OS}.
\begin{fact}\cite{OS}\label{classification_OS}
Let $R$ be a Dedekind domain and let $^*R$ be an ultrapower of $R$. For each maximal ideal $\fm$ in $^*R$, the localization $^*R_{\fm}$ of $^*R$ at $\fm$ is a valuation domain and any prime ideal $\fp$ in $^*R$ is contained in only one maximal ideal.
\end{fact}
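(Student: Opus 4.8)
Since the statement is attributed to \cite{OS}, one option is simply to cite it; alternatively, here is how I would reconstruct the argument, written for the case $R=\BZ$ that is actually used below. I would treat the two assertions separately.

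The first is pure elementary transfer. A Dedekind domain is a Pr\"ufer domain, and being a Pr\"ufer domain is a first-order property of integral domains — for instance $D$ is Pr\"ufer iff $(a,b)^2=(a^2,b^2)$ for all $a,b$, i.e.\ $\forall a,b\,\exists x,y\,(ab=xa^2+yb^2)$ (for $^*\BZ$ one may instead simply quote the already-noted fact that every finitely generated ideal is principal). Hence $^*R$ is again Pr\"ufer, and ``the localization at every maximal ideal is a valuation domain'' is one of the standard equivalent formulations of the Pr\"ufer condition; localizing once more shows that $^*R_\fp$ is a valuation domain for every prime $\fp$.

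For the second assertion I would restrict to a nonzero prime $\fp$ (the zero ideal lies in every maximal ideal, so it must be excluded). Existence of a maximal ideal over $\fp$ is Zorn's lemma, so the point is uniqueness. Suppose $\fm_1\neq\fm_2$ are maximal ideals both containing $\fp$ and pick $0\neq x\in\fp$. By Theorem~\ref{maximalfilter_maximalideal} each $\fm_j$ equals $I(\mathcal{F}_j)$ for a maximal filter $\mathcal{F}_j=\mathcal{F}(\fm_j)$ on $\pr(^*\BZ)$, so $n\in\fm_j\iff\pr(n)\in\mathcal{F}_j$; in particular $\pr(x)\in\mathcal{F}_1\cap\mathcal{F}_2$. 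Since $\mathcal{F}_1\neq\mathcal{F}_2$ I may choose $\pr(a)\in\mathcal{F}_1\setminus\mathcal{F}_2$, and then split $\pr(x)=T_1\sqcup T_2$ with $T_2:=\pr(a)\cap\pr(x)=\pr(\gcd(a,x))$ and $T_1:=\pr(x)\setminus T_2$, the latter realized as $\pr(b)$ for $b=(b_i)$ with $b_i$ the product of the primes dividing $x_i$ but not $a_i$. Now $T_2\in\mathcal{F}_1$ (intersection of two members of $\mathcal{F}_1$), so $T_1\notin\mathcal{F}_1$; and $T_2\notin\mathcal{F}_2$, since $T_2\subseteq\pr(a)$ and upward closure would otherwise force $\pr(a)\in\mathcal{F}_2$. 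Factoring $x=uv$ with $\pr(u)=T_1$ and $\pr(v)=T_2$ (split each $x_i$ into its $T_1$-part and $T_2$-part) then gives $u\notin\fm_1$ and $v\notin\fm_2$, hence $u,v\notin\fp$, contradicting $uv=x\in\fp$ with $\fp$ prime.

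The first assertion is routine. The content, and the step I expect to be the main obstacle, is the combinatorial fact hidden in ``$T_1\notin\mathcal{F}_1$'': a maximal filter on $\pr(^*\BZ)$ is decisive on internal partitions of any of its members — of $T_1,T_2$ exactly one lies in $\mathcal{F}_j$ — which has to be extracted from the filter axioms together with maximality (in the spirit of the proof of Theorem~\ref{maximalfilter_maximalideal}). This, together with the realizability of the factorization $x=uv$ with prescribed prime factors, is what ties the argument to the structure theory of $^*\BZ$; for a general Dedekind domain $R$, where that machinery is not available verbatim, one would follow \cite{OS} directly.
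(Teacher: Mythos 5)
The paper does not prove this Fact at all: it is imported as a black box from Olberding--Shapiro \cite{OS} and used without argument in Theorem~\ref{primeideal_semigroup}. So there is no ``paper's proof'' to compare against; what you have written is an independent reconstruction, and it is essentially correct for $R=\BZ$, which is the only case the paper actually uses.

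On the first assertion: the Pr\"ufer transfer via $\forall a,b\,\exists x,y\,(ab=xa^2+yb^2)$ is a valid first-order axiomatization and works for any Dedekind $R$. For $^*\BZ$ specifically you can bypass it entirely, as you half-note in passing: the paper already records that every finitely generated ideal of $^*\BZ$ is principal (the B\'ezout property, which is the transfer of the PID property of $\BZ$), and a local B\'ezout domain is a valuation domain; so $^*\BZ_{\fm}$ is a valuation ring by a one-line localization argument.

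On the second assertion: the argument is sound, and using Theorem~\ref{maximalfilter_maximalideal} creates no circularity since that theorem precedes Fact~\ref{classification_OS} and does not rely on it. Two small points, though. First, the closing worry about needing ``decisiveness'' of a maximal filter is unfounded: your own argument does not use maximality at that spot. That $T_1\notin\CF_1$ is just the filter axioms --- if $T_1,T_2\in\CF_1$ then $T_1\cap T_2=\pr(\gcd(b,\gcd(a,x)))=\emptyset\in\CF_1$, contradicting axiom (a) --- and $T_2\notin\CF_2$ is axiom (c) (upward closure). No dichotomy ``exactly one of $T_1,T_2$ lies in $\CF_j$'' is needed. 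Second, the factorization $x=uv$ with $\pr(u)=T_1$, $\pr(v)=T_2$ need not appeal to the explicit ultrapower representation: the statement ``for all nonzero $x,a$ there are $u,v$ with $x=uv$, $\gcd(u,a)=1$, and every prime dividing $v$ divides $a$'' is first-order and true in $\BZ$, hence true in $^*\BZ$. Finally, keep in mind your argument is $\BZ$-specific; for a general Dedekind $R$ the ideal $(a,x)$ need not be principal in $^*R$ and the filter correspondence of Theorem~\ref{maximalfilter_maximalideal} is not available, so there one really does have to fall back on \cite{OS}, as you acknowledge.
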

\noindent Next we describe prime ideals in $^*\BZ$ in terms of semisubgroups of valuation groups induced from each maximal ideals using Fact \ref{classification_OS}. Note that if $\fm$ is principal, then by Theorem \ref{maximalfilter_maximalideal}, $\fm=p{^*}\BZ$ for a prime $p$. So the valuation is $d_{p}$ with its valuation group $p^{^*\BZ}$, which is elementary equivalent to $(\BZ,+,<)$.
\begin{definition}
Let $(S,\cdot,<)$ be an ordered semigroup, that is, $(S,\cdot)$ is a semigroup and the binary relation $<$ is a totally linear order such that for $x,y,z\in S$, if $x<y$, then $z\cdot x<z\cdot y$ and $x\cdot z<y\cdot z$. A subset $T$ of $S$ is called a {\em subsemigroup} if $(T,\cdot)$ is a semigroup.
\begin{enumerate}
	\item A subsemigroup $T$ is {\em proper} if $T\neq S$.
	\item A subsemigroup $T$ is {\em convex} if for any $t_1<t_2\in T$ and $t_1<s<t_2$, $s$ is again in $T$. For a subsemigroup $T$, the convex hull $\langle T \rangle$ of $T$ is the smallest convex subsemigroup of $S$ containing $T$.
	\item A subsemigruop $T$ is {\em radical} if for any $x\in T$ and $n>0$, there is $y\in S$ such that $y^n=x$, then $y\in T$.
	\item A subsemigroup $T$ is {\em without right-end-point} if $T$ is convex and for each $x\in T$, if $y\in S$ satisfies $x<y$, then $y\in T$. For a subsemigroup $T$, denote by $\langle T\rangle_{\infty}$ the smallest subsemigroup of $S$ containing $T$ which is without right-end-point. 
	\item A subsemigroup $T$ is {\em prime} if for $a,b\in S$ with $ab\in T$, either $a\in T$ or $b\in T$.
\end{enumerate}
\end{definition}

\begin{remark}\label{prime_radical_semigp}
Let $(S,<)$ be an ordered semigroup. For $x\in S$ and $n\in \BZ_{>0}$, if there is $y\in S$ such that $y^n=x$, then such $y$ is unique. Let $T$ be a subsemigroup of $S$ without right-end-point. Then $T$ is radical if and only if $T$ is prime. Suppose $T$ is radical. Let $a\le b\in S$. Suppose $ab\in T$. We have that $b<ab<b^2$. Since $T$ is without right-end-point, $b^2\in T$ and by divisibility of $T$, $b$ is in $T$. Conversely, suppose $T$ is prime. If we have $x\in S$ and $n>0$ such that $x^n\in T$, then $x$ is in $T$ since $T$ is prime.
\end{remark}
\begin{example}
$(\BN,+,<)$ and $(\BQ_{\ge 0},+,<)$ are ordered semigroups. The set $(p^{^*\BN},\times,<)$ of power of $p$ is also an ordered semigroup , where for $x,y\in p^{^*\BN}$, $x<y$ if $y/x\in p^{^*\BN}$.
\end{example}
\begin{definition}\label{ideal_valuationring}
Let $(R,\nu)$ be a valuation ring. Consider $S:=\nu(R\setminus\{0\})$ as an ordered semigroup.
\begin{enumerate}
	\item For an ideal $I$ in $R$, define $S(I):=\nu(I)\cap S$.
	\item For a subsemigroup $T$ in $S$, define $I(T):=\bigcup_{n\in T}\nu^{-1}([n,\infty])$.

\end{enumerate}
\end{definition}
\begin{proposition}\label{basic_ideal_semigroup}
Let $(R,\nu)$ be a valuation ring and let $S:=\nu(R\setminus\{0\})$. Choose an ideal $I$ in $R$ and a subsemigroup $T$ of $S$.
\begin{enumerate}
	\item $S(I)$ is a subsemigroup without right-end-point.
	\item $I(T)$ is an ideal in $R$, and $I(T)=I(\langle T\rangle_{\infty})$.
	\item If $I=\sqrt{I}$, then $S(I)$ is radical.
\end{enumerate}
Moreover there is a one-to-one correspondence between ideals in $R$ and subsemigruops in $S$ without right-end-point by the map $I\mapsto S(I)$.
\end{proposition}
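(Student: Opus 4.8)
The plan is to establish the three itemized claims and the final bijection by routine bookkeeping with the valuation ring $(R,\nu)$, writing $K$ for its fraction field and $S=\nu(R\setminus\{0\})$ for the associated semigroup, and using throughout three standard facts: for nonzero $x,y\in R$ one has $\nu(x)\ge\nu(y)$ exactly when $y\mid x$ in $R$ (so $\nu(x)=\nu(y)$ exactly when $x/y$ is a unit of $R$); the ultrametric inequality $\nu(x+y)\ge\min\{\nu(x),\nu(y)\}$ holds whenever $x+y\ne 0$; and $\nu(1)$ is the least element of $S$, so $s\cdot t\ge t$ for all $s,t\in S$. It is also convenient to record at once the reformulation $I(T)=\{0\}\cup\nu^{-1}(U)$, where $U=\{s\in S:\ s\ge n\text{ for some }n\in T\}$ is the upward closure of $T$ in $S$; this is immediate from the definition of $I(T)$ as a union of the valuation ideals $\nu^{-1}([n,\infty])$, each of which contains $0$.

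For (1): if $\nu(a),\nu(b)\in S(I)$ with $a,b\in I\setminus\{0\}$, then $ab\in I$ is nonzero and $\nu(ab)=\nu(a)\cdot\nu(b)$, so $S(I)$ is closed under the semigroup operation. To see it has no right-end-point it suffices to check that it is upward closed (which, for a nonempty set, also yields convexity): given $\nu(a)\in S(I)$ and $t\in S$ with $t\ge\nu(a)$, choose $c\in R\setminus\{0\}$ with $\nu(c)=t$; then $a\mid c$, say $c=ra$ with $r\in R$, so $c\in I$ and $t=\nu(c)\in S(I)$. For (3): assume $I=\sqrt{I}$, take $s\in S(I)$ and $k>0$, and suppose $s=t^{k}$ for some $t\in S$. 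Writing $s=\nu(a)$ with $a\in I\setminus\{0\}$ and $t=\nu(b)$ with $b\in R\setminus\{0\}$ gives $\nu(b^{k})=\nu(b)^{k}=\nu(a)$, hence $b^{k}/a$ is a unit of $R$ and $b^{k}\in I$; radicality of $I$ forces $b\in I$, so $t=\nu(b)\in S(I)$, which is exactly the radicality condition for $S(I)$.

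For (2): the reformulation makes it transparent that $I(T)$ is an ideal, since $\nu(x),\nu(y)\in U$ implies $\nu(x+y)\ge\min\{\nu(x),\nu(y)\}\in U$ (using that $S$ is totally ordered and $U$ is upward closed), $\nu(x)\in U$ and $r\in R$ imply $\nu(rx)\ge\nu(x)\in U$, and $0\in I(T)$ by construction. For the equality $I(T)=I(\langle T\rangle_{\infty})$ I will identify $\langle T\rangle_{\infty}$ with the same set $U$: $U$ is upward closed, hence convex, and it is a subsemigroup because $s_{1}\ge t_{1}$ and $s_{2}\ge t_{2}$ with $t_{1},t_{2}\in T$ give $s_{1}s_{2}\ge t_{1}t_{2}\in T$; conversely any subsemigroup without right-end-point that contains $T$ must contain $U$. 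Since $I(T)$ depends on $T$ only through $U$, the two ideals coincide.

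Finally, for the bijection: part (1) makes $I\mapsto S(I)$ a well-defined map from ideals of $R$ to subsemigroups of $S$ without right-end-point, and part (2) makes $T\mapsto I(T)$ a map in the opposite direction; it remains to check that these are mutually inverse. For an ideal $I$, since $S(I)$ has no right-end-point we have $\langle S(I)\rangle_{\infty}=S(I)$, so $I(S(I))=\{0\}\cup\nu^{-1}(S(I))$; and $\nu^{-1}(S(I))=I\setminus\{0\}$, because $\nu(x)\in S(I)$ forces $\nu(x)=\nu(a)$ for some $a\in I\setminus\{0\}$, whence $x/a$ is a unit and $x\in I$. For $T$ without right-end-point we have $I(T)\setminus\{0\}=\nu^{-1}(T)$, and since $\nu$ maps $R\setminus\{0\}$ onto $S\supseteq T$ this gives $S(I(T))=\nu(\nu^{-1}(T))=T$. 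I do not expect a genuine obstacle here; the single point requiring care---and the conceptual content of the statement---is the restriction to subsemigroups without right-end-point: the operators $S(\cdot)$ and $I(\cdot)$ automatically absorb a subsemigroup into its closure $\langle\cdot\rangle_{\infty}$, so no correspondence with all subsemigroups of $S$ can be a bijection, and the closure identity in part (2) is exactly what makes the restricted correspondence work.
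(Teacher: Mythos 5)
Your proposal is correct and follows essentially the same route as the paper's proof: show $S(I)$ is upward closed, identify $\langle T\rangle_\infty$ with the upward closure of $T$ so that $I(T)$ is unchanged, use that valuation-equal nonzero elements differ by a unit to get injectivity, and use upward-closedness of $T$ to get $S(I(T))=T$. The only cosmetic difference is that you verify $I(T)$ is an ideal directly from the ultrametric inequality via the reformulation $I(T)=\{0\}\cup\nu^{-1}(U)$, whereas the paper argues it is a (totally ordered, hence directed) union of the valuation ideals $\nu^{-1}([n,\infty])$ — but this is the same computation packaged differently.
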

\begin{proof}
Let $(R,\nu)$ be a valuation ring and let $S:=\nu(R\setminus\{0\})$.\\

(1) Let $I$ be an ideal in $R$. Then $S(I)=\nu(I)\cap S$ itself is a semigroup. Choose $x_1\in S(I)$ and $x_2\in S$ such that $x_1<x_2$. Then there are $a_1,a_2\in R$ such that $\nu(a_1)=x_1$ and $\nu(a_2)=x_2$. Since $x_1<x_2$, $a_2/a_1\in R$ and $a_2=(a_2/a_1)a_1\in I$. Thus $x_2\in S(I)$.\\

(2) Note that for each $n\in S$, $\nu^{-1}([n,\infty])$ is an ideal in $R$. Let $T$ be a subsemigroup. Then $I(T)=\bigcup_{n\in T}\nu^{-1}([n,\infty])$ is a union of ideals in $R$ and it is an ideal. It remains to show $I(T)=I(\langle T\rangle_{\infty})$. We have that $I(T)\subseteq I(\langle T\rangle)$ since $T\subset \langle T\rangle_{\infty}$ It is enough to show that $I(T)\supseteq I(\langle T\rangle)$. Let $a\in I(\langle T\rangle_{\infty})$. Then $\nu(a)\in \langle T\rangle$. Thus there are $x\in \langle T\rangle_{\infty}$ such that $x\le \nu(a)$. This implies $a\in \nu^{-1}([x,\infty])$ and $a\in I(T)$.\\

(3) Suppose $I=\sqrt{I}$. Consider $x\in S$ such that $x^n\in S(I)$ for some $n>0$. Let $x=\nu(a)$ for some $a\in R$. Since $x^n\in S(I)$, it implies that $a^n\in I$, and thus $a\in I$.\\

Now we show the moreover part. First we show the given map is injective. Consider two ideals $I,J$ such that $S(I)=S(J)$. Let $a\in I$. Since $S(I)=S(J)$. There is $b\in J$ such that $\nu(a)=\nu(b)$, and this implies $a/b$ is a unit in $R$. Thus, $a=(a/b)b\in J$ and $I\subset J$. By the same way, $J$ is a subset of $I$ and therefore $I=J$. Next we show that the map is onto. Let $T$ be a subsemigroup of $S$ without right-end-point. Consider $S(I(T))$. It is clear that $T\subset S(I(T))$. Take $x\in S(I(T))$. By definition of $S(I(T))$, we have $a\in I(T)$ such that $\nu(a)=x$. Since $a\in I(T)$, for some $n,m\in T$, $n\le x\le m$. Since $T$ is convex, $x$ is in $T$. Thus $S(I(T))\subseteq T$ and we have that $T=S(I(T))$.
\end{proof}
\begin{remark}\label{primeideal_localization}
Let $R$ be a commutative ring and $\fm$ be a maximal ideal in $R$. Then there is one-to-one correspondence between prime ideals of $R$ contained in $\fm$ and prime ideals in the localization $R_{\fm}$ of $R$ at $\fm$.
\end{remark}
\noindent Let $\fm$ be a maximal ideal in $^*\BZ$. Then the localization $^*\BZ_{\fm}$ is a valuation ring by Fact \ref{classification_OS}. Let $\nu_{\fm}$ be the valuation of $^*\BZ_{\fm}$. We write $S_{\fm}$ for the ordered semigroup $\nu_{\fm}(^*\BZ_{\fm}\setminus\{0\})$.
\begin{theorem}\label{primeideal_semigroup}
For a given maximal ideal $\fm$ of $^*\BZ$, there is a one-to-one correspondence between prime ideals contained in $\fm$ and proper radical subsemigroup of $S_{\fm}$ without right-end-point by the map $P\mapsto \nu_{\fm}((P{^*\BZ}_{\fm})\setminus\{0\})$.
\end{theorem}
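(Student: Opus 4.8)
The plan is to establish the correspondence by factoring it through the localization $^*\BZ_\fm$, where Fact \ref{classification_OS} turns $^*\BZ_\fm$ into a valuation ring and the valuation-theoretic dictionary of Proposition \ref{basic_ideal_semigroup} becomes available. Concretely, I would chain together three bijections: Remark \ref{primeideal_localization} (prime ideals of $^*\BZ$ inside $\fm$ versus prime ideals of $^*\BZ_\fm$), the ideal-semigroup correspondence of Proposition \ref{basic_ideal_semigroup} applied to the valuation ring $(^*\BZ_\fm,\nu_\fm)$ with value semigroup $S_\fm$, and the trivial identification $\nu_\fm\big((P{^*\BZ}_\fm)\setminus\{0\}\big)=S(P{^*\BZ}_\fm)$. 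Since the composite of these is exactly the map $P\mapsto\nu_\fm\big((P{^*\BZ}_\fm)\setminus\{0\}\big)$ in the statement, everything reduces to checking that the middle bijection $\mathfrak a\mapsto S(\mathfrak a)$ restricts to a bijection between \emph{prime} ideals of $^*\BZ_\fm$ and \emph{proper radical} subsemigroups of $S_\fm$ without right-end-point.

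For the forward half of this restriction, let $\mathfrak p$ be a prime ideal of $^*\BZ_\fm$. It is proper, and since $S(^*\BZ_\fm)=S_\fm$ and $\mathfrak a\mapsto S(\mathfrak a)$ is injective, $S(\mathfrak p)\subsetneq S_\fm$, i.e.\ $S(\mathfrak p)$ is a proper subsemigroup; moreover $\mathfrak p=\sqrt{\mathfrak p}$, so $S(\mathfrak p)$ is radical by Proposition \ref{basic_ideal_semigroup}(3). For the converse, let $T$ be a proper radical subsemigroup of $S_\fm$ without right-end-point; its preimage under the bijection is $I(T)$, with $S(I(T))=T\subsetneq S_\fm$, so $I(T)$ is a proper ideal. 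To see $I(T)$ is prime, take $a,b\in{^*\BZ}_\fm$ with $ab\in I(T)$: if $a$ or $b$ is zero it already lies in $I(T)$, so assume both are nonzero, whence $ab\ne 0$ since $^*\BZ_\fm$ is a domain. By definition of $I(T)$ there is $t\in T$ with $\nu_\fm(ab)\ge t$, and because $T$ is convex and without right-end-point this forces $\nu_\fm(ab)\in T$. By Remark \ref{prime_radical_semigp} the radical subsemigroup $T$ (being without right-end-point) is prime, so from $\nu_\fm(a)\nu_\fm(b)=\nu_\fm(ab)\in T$ we obtain $\nu_\fm(a)\in T$ or $\nu_\fm(b)\in T$, hence $a\in I(T)$ or $b\in I(T)$. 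Thus $I(T)$ is prime, and the restricted correspondence is a bijection.

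Finally I would simply assemble the three bijections to conclude that $P\mapsto\nu_\fm\big((P{^*\BZ}_\fm)\setminus\{0\}\big)$ is a one-to-one correspondence between the prime ideals of $^*\BZ$ contained in $\fm$ and the proper radical subsemigroups of $S_\fm$ without right-end-point. The main obstacle is the converse half of the middle step, namely verifying that $I(T)$ is prime: it is the one place where one must genuinely combine the "without right-end-point" hypothesis (to pull $\nu_\fm(ab)$ back into $T$ from the mere inequality $\nu_\fm(ab)\ge t$) with the radical-equals-prime equivalence of Remark \ref{prime_radical_semigp}; everything else is bookkeeping along the chain of bijections, together with the observation that properness of ideals matches properness of subsemigroups via injectivity of $\mathfrak a\mapsto S(\mathfrak a)$.
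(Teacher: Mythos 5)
Your proof is correct and follows essentially the same route as the paper's: pass through the localization via Remark \ref{primeideal_localization}, invoke the ideal--subsemigroup dictionary of Proposition \ref{basic_ideal_semigroup}, and use Remark \ref{prime_radical_semigp} to match primeness of ideals with the radical condition on subsemigroups. The paper's proof is extremely terse --- it merely cites these three ingredients and adds the remark ``any prime ideal is itself radical'' --- whereas you actually carry out the verification that the middle bijection restricts to primes versus proper radical subsemigroups, in particular the nontrivial converse that $I(T)$ is prime, which is exactly the point the paper leaves implicit.
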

\begin{proof}
Choose a maximal ideal $\fm$ of $^*\BZ$. By Theorem \ref{classification_OS}, the localization $^*\BZ_{\fm}$ of $^*\BZ$ at $\fm$ is a valuation ring. Apply Remark \ref{primeideal_localization} and Proposition \ref{basic_ideal_semigroup} to the valuation ring $^*\BZ_{\fm}$. Note that any prime ideal is itself radical.
\end{proof}
\begin{example}
There is only one proper radical subsemigroup without right-end-point(called PRwE semigroup in short), $\BN_{>0}$ and $\BQ_{>0}$ in $\BN$ and $\BQ_{\ge 0}$ respectively. For each prime $p$ in $^*\BZ$, there are infinitely many $x_0<x_1<x_2<\ldots$ in $d_p({^*}\BZ)=p^{^*\BN}$ such that all convex hulls $T_i=\{x_i^1,x_i^2,\ldots\}$ of $x_i$'s are disjoint, and they give infinitely many different PRwE semigroups. Thus there are infinitely many prime ideals in $p{^*\BZ}$.
\end{example}

\section{Appendix}
Here, we extend Remark \ref{ultraproduct_abeliangroup} to definable abelian groups in saturated nonstandard models of $\Th(\BQ)$ in the ring language $\CL_{ring}=\{+,-,\times;0,1\}$. Let $^*\BQ$ be a saturated model of $\Th(\BQ)$ and $^*\BZ$ be a nonstandard integer ring of $^*\BZ$ corresponding to $\BZ$ in $\BQ$, which is definable. Let $(A,+_A,1_A)$ be an definable abelian group in $^*\BQ$. It is clear that $A$ is a $\BZ$-module as 
$$
n\cdot a:=
\begin{cases}
\overbrace{a+_A\cdots+_A a}^{|n|}&\mbox{if } n>0\\
1_A&\mbox{if } n=0\\
\underbrace{(-a)+_A\cdots+_A (-a)}^{|n|}&\mbox{if } n<0
\end{cases}
$$
for $a\in A$ and $n\in \BZ$. Our main aim in this appendix is to show that we can see $A$ as $^*\BZ$-module extending the natural $\BZ$-module structure, that is, there is a map $\cdot^*:{^*\BZ}\times A\rightarrow A$ to make $A$ as $^*\BZ$-module and $n\cdot^* a=n\cdot a$ for $a\in A$ and $n\in \BZ$.

Let $V$ be the standard model of ZFC axiom in the language $\CL_{set}=\{\in\}$. We know that $\BZ$ and $\BQ$ are definable with the ring and field structures of $\BZ$ and $\BQ$ definable in the language $\CL_{set}$ and let $\phi_{\BZ}(x)$ and $\phi_{\BQ}(x)$ be such formulas with respect to $\BZ$ and $\BQ$. Consider a definable group $(G,+_G,1_G)$ and let $\theta(\x,\y)$ be a formula such that for a parameter set $\ba$, the formula $\theta(\x,\ba)$ defines the set $G$ and says the function $+_G$ on $G$ is a group operation with the identity $1_G$. Let $\CC_\theta(\y)$ be a formula saying that for $\bb\in V^{|\y|}$, $\CC_\theta(\bb)$ holds if and only if $\theta(\x,\bb)$ defines a group $(G_{\bb},+_{G_{\bb}},1_{G_{\bb}})$. Consider a formula $\CF_\theta(\x,\y)\equiv \CC_\theta(\y)\wedge \theta(\x,\y)$ parametrizing all pairs of groups definable by the formula $\theta$ and points in such groups. Consider a function $f$ from $\BZ\times \CF_\theta(V)\rightarrow \CF_\theta(V)$ such that $f(0,\g,\bb)=(1_{G_{\bb}},\bb)$ and $f(n+1,\g,\bb)=(\g+_{G_{\bb}} \pi(f(n,\g,\bb)),\bb)$ for $n\in \BZ$ and $(\g,\bb)\in \CF_{\theta}(V)$, where $\pi$ is the projection map from $\CF_{\theta}(V)$ to $\bigcup_{\bb\in \CC_{\theta}(V)} G_{\bb}$. Then by recursion theorem, this function $f$ is definable. Moreover if $G_{\bb}$ is commutative for $\bb \in V^{|\y|}$, then the $\BZ$-module structure on $G_{\bb}$ is definable by $f(\cdot,\cdot,\bb)$.

Now let $\kappa$ be an inaccessible cardinal and let $^*V$ be a saturated extension of $V$ of cardinality $\kappa$. Let $^*\BZ=\phi_{\BZ}({^*V})$ and $^*\BQ=\phi_{\BQ}({^*V})$ which are saturated extensions of $\BZ$ and $\BQ$ of cardinality $\kappa$. Then any definable abelian group $({^*G},+_{^*G},1_{^*G})$ has a $^*\BZ$-module structure extending the $\BZ$-module structure. Since any saturated model is unique up to isomorphism, we get the following result : Let $\kappa$ be an inaccessible cardinal. Let $^*\BQ$ be a saturated model of $\Th({^*\BQ})$ of cardinality $\kappa$ and let $^*\BZ$ be a nonstandard integer ring corresponding to $\BZ$ in $\BQ$.

\begin{theorem}\label{nonstandard-module}
Any abelian group definable in $^*\BQ$ has a $^*\BZ$-module structure extending the $\BZ$-module structure. Specially any elliptic curves over $^*\BQ$ has a $^*\BZ$-module structure.
\end{theorem}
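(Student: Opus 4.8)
The plan is to follow the strategy sketched in the preamble above: realize $\BQ$ and every abelian group definable in it inside a single model of set theory, construct the nonstandard scalar multiplication once and for all by recursion at the set-theoretic level, and then transfer by elementarity. First I would fix a model $V\models\mathrm{ZFC}$ in the language $\CL_{set}=\{\in\}$ together with the $\CL_{set}$-formulas $\phi_{\BZ}(x),\phi_{\BQ}(x)$ that define $\BZ$ and $\BQ$ with their ring and field structure. The reason for descending to $\CL_{set}$ is that the recursion theorem is a theorem of $\mathrm{ZFC}$, so any function obtained by ordinary recursion over $\BZ$ is again $\CL_{set}$-definable, uniformly in parameters.

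Next, for a fixed formula $\theta(\x,\y)$ I would introduce $\CC_{\theta}(\y)$ (``$\theta(\x,\y)$ defines a group $G_{\y}$'') and $\CF_{\theta}(\x,\y)\equiv\CC_{\theta}(\y)\wedge\theta(\x,\y)$, and define the uniform iteration map $f\colon\BZ\times\CF_{\theta}(V)\to\CF_{\theta}(V)$ by $f(0,\g,\bb)=(1_{G_{\bb}},\bb)$ and $f(n+1,\g,\bb)=(\g+_{G_{\bb}}\pi(f(n,\g,\bb)),\bb)$, with the symmetric clause for $n<0$; the recursion theorem makes $f$ an $\CL_{set}$-definable function. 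The key point is then to write down the $\CL_{set}$-sentence
\[
\sigma_{\theta}\ \equiv\ \forall\y\,\bigl[\,\CC_{\theta}(\y)\wedge(G_{\y}\text{ is abelian})\ \to\ f(\cdot,\cdot,\y)\text{ makes }G_{\y}\text{ into a }\BZ\text{-module}\,\bigr],
\]
whose conclusion unwinds into the finitely many first-order conditions $1\cdot a=a$, $(m+n)\cdot a=(m\cdot a)+_{G_{\y}}(n\cdot a)$, $m\cdot(a+_{G_{\y}}b)=(m\cdot a)+_{G_{\y}}(m\cdot b)$ and $(mn)\cdot a=m\cdot(n\cdot a)$, and to check that $\sigma_{\theta}$ holds in $V$ — a routine induction using commutativity of $G_{\y}$.

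With $\sigma_{\theta}$ in hand I would invoke saturation. Pick an inaccessible cardinal $\kappa$ and a saturated elementary extension $^*V\succ V$ of cardinality $\kappa$; then $^*\BZ:=\phi_{\BZ}(^*V)$ and $^*\BQ:=\phi_{\BQ}(^*V)$ are saturated models of $\Th(\BZ)$ and $\Th(\BQ)$ of cardinality $\kappa$, and by elementarity $^*V\models\sigma_{\theta}$. Hence for every parameter tuple $\bb\in{^*V}$ for which $\theta(\x,\bb)$ defines an abelian group $G_{\bb}$ in $^*\BQ$, the transferred map $^*f(\cdot,\cdot,\bb)$ equips $G_{\bb}$ with a $^*\BZ$-module structure; and for standard $n\in\BZ\subset{^*\BZ}$ the recursion defining $^*f(n,\cdot,\bb)$ unwinds to the ordinary $|n|$-fold $+_{G_{\bb}}$-sum, so this structure extends the $\BZ$-module structure. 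Since saturated models of $\Th(\BQ)$ of cardinality $\kappa$ are unique up to isomorphism, an arbitrary such $^*\BQ$ may be identified with $\phi_{\BQ}(^*V)$ and the module structure transported along the isomorphism; letting $\theta$ range over all formulas covers every definable abelian group. The elliptic-curve assertion is then the special case in which $\theta$ is the formula from Section 2 defining $(E(A,B),+_E)$, with parameters $(A,B)$ satisfying $-16(4A^3+27B^2)\neq0$.

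The main obstacle will not be mathematical depth but keeping the quantification straight: one must read ``definable in $^*\BQ$'' as definable with parameters from $^*\BQ$ (equivalently, from $^*V$), so that the instance of $\sigma_{\theta}$ at those parameters is exactly what is needed, and one must verify that passing to $^*V\succ V$ really yields a \emph{saturated} (not merely $\aleph_1$-saturated) nonstandard $^*\BQ$, which is precisely where the inaccessible $\kappa$ and the uniqueness of saturated models enter. A secondary point is that $f$ is defined on the coded family $\CF_{\theta}(V)$ rather than on a single group, so one should confirm that the transfer respects the fibering over the parameter space $\CC_{\theta}$.
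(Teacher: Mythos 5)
Your proposal follows the paper's argument essentially verbatim: descend to a model $V$ of ZFC, obtain the uniform iterated-sum function $f$ on $\CF_{\theta}(V)$ via the recursion theorem, transfer the module axioms by elementarity to a saturated $^*V\succ V$ of inaccessible cardinality $\kappa$, and use uniqueness of saturated models to identify $\phi_{\BQ}({^*V})$ with the given $^*\BQ$. The explicit sentence $\sigma_{\theta}$ and the verification that standard $n\in\BZ$ recover the ordinary $\BZ$-action are details the paper leaves implicit, but the route and all the key ideas are the same.
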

\noindent As a corollary, we extend Theorem \ref{rkbound_wMW} to saturated models of $\Th(\BQ)$.
\begin{theorem}\label{rkbound-saturatedmodels}
The following are equivalent :
\begin{enumerate}
	\item The ranks of elliptic curves over $\BQ$ are uniformly finitely bounded.
	\item For each $n\ge 2$, the cardinalities of weak $n$th Mordell-Weil groups over $\BQ$ are uniformly finitely bounded.
	\item The cardinalities of weak $2$nd Mordell-Weil groups over $\BQ$ are uniformly finitely bounded.
	\item Weak Mordell-Weil property holds for $^*\BQ$.
	\item Weak $2$nd Mordell-weil groups of elliptic curves over $^*\BQ$ are finite.
	\item Nonstandard Moredell-Weil property hold for $^*\BQ$.
\end{enumerate}
\end{theorem}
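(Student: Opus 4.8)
The plan is to re-run the proof of Theorem~\ref{rkbound_wMW}, making two substitutions. First, the transfer used there (``$\BQ\models\sigma$, hence $^*K\models\sigma$ for all $^*K\equiv\BQ$'') is replaced by the elementary equivalence $^*\BQ\equiv\BQ$ together with the $\aleph_1$-saturation of $^*\BQ$, which is available since $\kappa$ is uncountable. Second, the coordinatewise $^*\BZ$-module structure of Remark~\ref{ultraproduct_abeliangroup} is replaced by the module structure supplied by Theorem~\ref{nonstandard-module}; this is exactly what makes the Nonstandard Mordell--Weil property meaningful for a saturated $^*\BQ$. The equivalence $(1)\Leftrightarrow(2)\Leftrightarrow(3)$ is a statement about $\BQ$ alone, identical to the one in Theorem~\ref{rkbound_wMW}. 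Throughout I use the formulas $E(A,B;x,y,z)$, the relations $\sim_{E,n}$, and $\Phi_{n,m}(A,B)\equiv\exists\bar x_1,\ldots,\bar x_m\,\phi_{E(A,B),n,m}$, where $\Phi_{n,m}$ says that the weak $n$th Mordell--Weil group of $E(A,B)$ has cardinality at least $m$.

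For $(3)\Rightarrow(4)$: by $(3)\Rightarrow(2)$ there is, for each $n\ge2$, a bound $M_n$ with $\BQ\models\forall A,B\,\neg\Phi_{n,m}(A,B)$ for all $m\ge M_n$; since $^*\BQ\equiv\BQ$ the same holds in $^*\BQ$, so every weak $n$th Mordell--Weil group over $^*\BQ$ is finite and Weak Mordell--Weil property holds. Then $(4)\Rightarrow(5)$ is the special case $n=2$. For $(5)\Rightarrow(3)$ I argue contrapositively, as for $(6)\Rightarrow(3)$ in Theorem~\ref{rkbound_wMW}: if $(3)$ fails then the type $\{\Phi_{2,m}(A,B):m\ge1\}$ is finitely satisfiable in $\BQ$ (each $\Phi_{2,m}$ is realized in $\BQ$, and they increase in $m$), hence realized in the $\aleph_1$-saturated model $^*\BQ$ by some pair $(A^*,B^*)$; then $E(A^*,B^*)$ has an infinite weak $2$nd Mordell--Weil group, contradicting $(5)$.

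It remains to bring in $(6)$. For $(6)\Rightarrow(5)$: by Theorem~\ref{nonstandard-module} each $E(^*\BQ)$ is a $^*\BZ$-module, so if it is generated by $n$ elements the surjection $(^*\BZ/2{^*\BZ})^n\to E(^*\BQ)/2E(^*\BQ)$ from the argument of Remark~\ref{ultraproduct_abeliangroup}(2) is available, and since $^*\BZ/2{^*\BZ}\cong\BZ/2\BZ$ is finite we get $|E/2E(^*\BQ)|\le2^n<\infty$, which is $(5)$. For $(1)\Rightarrow(6)$ I use the construction behind the Appendix: realize $^*\BQ$ as $\phi_{\BQ}({^*V})$ for $^*V$ a saturated extension of $V$ of cardinality $\kappa$, which is legitimate because the saturated model of $\Th(\BQ)$ of that cardinality is unique up to isomorphism. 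Over $\BQ$, bounded rank together with the finiteness of the possible torsion subgroups gives a single $r$ with $E(\BQ)$ generated by at most $r$ elements for every nonsingular $E$; hence the statement ``for every $(A,B)\in\BQ^2$ defining a nonsingular elliptic curve, $E(A,B)$ is generated by at most $r$ elements over $\BZ$'' is a true $\CL_{set}$-sentence in $V$ --- it is first order because $\BZ$, $\BQ$, the curve $E(A,B)$, its group law and the $\BZ$-scalar multiplication are all $\CL_{set}$-definable and the quantifiers involved are bounded. Transferring along $V\equiv {^*V}$, and noting that $\phi_{\BZ}$ denotes $^*\BZ$ in $^*V$ and that the scalar multiplication so obtained is the one of Theorem~\ref{nonstandard-module}, the same sentence now says that every $E(A,B)({^*\BQ})$ is generated by at most $r$ elements over $^*\BZ$; that is, the Nonstandard Mordell--Weil property holds for $^*\BQ$.

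Assembling the implications, $(1)\Leftrightarrow(2)\Leftrightarrow(3)$, the cycle $(3)\Rightarrow(4)\Rightarrow(5)\Rightarrow(3)$, and $(1)\Rightarrow(6)\Rightarrow(5)$ show that all six conditions are equivalent. The only genuinely new step is $(1)\Rightarrow(6)$, and its point is precisely that ``finitely generated as a $\BZ$-module'' is not expressible in the ring language of $^*\BQ$, so the transfer cannot be carried out inside $^*\BQ$; it becomes first order only in the set-theoretic framework of the Appendix, where the transfer $V\equiv {^*V}$ automatically reinterprets ``$\BZ$'' as ``$^*\BZ$''. One should also verify that the $^*\BZ$-module structure produced in the Appendix restricts to the standard $\BZ$-action on $E(^*\BQ)$, so that the transferred statement is literally the Nonstandard Mordell--Weil property.
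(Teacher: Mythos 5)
The paper does not actually spell out a proof of this theorem; it merely asserts it ``As a corollary'' of Theorem~\ref{rkbound_wMW} together with Theorem~\ref{nonstandard-module}. Your reconstruction supplies the missing argument and does so along the lines the Appendix is clearly set up to support: the cycle $(3)\Rightarrow(4)\Rightarrow(5)\Rightarrow(3)$ is pure transfer plus saturation (with $\kappa$-saturation for $\kappa$ inaccessible giving the $\aleph_1$-saturation needed for $(5)\Rightarrow(3)$), $(6)\Rightarrow(5)$ reruns the surjection argument of Remark~\ref{ultraproduct_abeliangroup}(2) once Theorem~\ref{nonstandard-module} has made $E({^*\BQ})$ into a $^*\BZ$-module, and $(1)\Rightarrow(6)$ is the genuinely new step. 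You are right that the ultrapower-specific argument behind $(1)\Rightarrow(7)$ in Theorem~\ref{rkbound_wMW} (Remark~\ref{ultraproduct_abeliangroup}(1)) is unavailable here, and that one must instead work in $\CL_{set}$, transferring the sentence ``every nonsingular $E(A,B)(\BQ)$ is $\BZ$-generated by at most $r$ elements'' along $V\preceq{^*V}$ so that ``$\BZ$'' reinterprets as ``$^*\BZ$''; this is exactly what the Appendix's definable recursion via $f$ is for. Your closing caveat --- that one should confirm the $^*\BZ$-action produced by transfer agrees with the one declared in Theorem~\ref{nonstandard-module} and restricts to the standard $\BZ$-action --- is the right thing to flag, and it holds because both are obtained by interpreting the same $\CL_{set}$-definable function $f$ in ${^*V}$. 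So the argument is correct and matches what the paper evidently intends but leaves implicit.

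One small point worth making explicit if you write this up: the bound $r$ in $(1)\Rightarrow(6)$ uses not only the assumed rank bound but also Mazur's uniform bound on torsion of elliptic curves over $\BQ$, just as the paper tacitly does in the $(1)\Rightarrow(7)$ step of Theorem~\ref{rkbound_wMW}; and the transfer step requires that ${^*V}$ be an \emph{elementary} extension of $V$ (which ``saturated extension'' is presumably meant to entail), else $V\models\sigma$ would not give ${^*V}\models\sigma$.
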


\bibliographystyle{amsplain}


\end{document}